\theoremstyle{plain}
\newtheorem{theorem}{Theorem}[section]
\newtheorem{lemma}[theorem]{Lemma}
\newtheorem{corollary}[theorem]{Corollary}
\newtheorem{proposition}[theorem]{Proposition}
\newtheorem{claim}[theorem]{Claim}
\newtheorem{definition}[theorem]{Definition}
\def\U{\operatorname{U}}
\def\Aut{\operatorname{Aut}}
\def\Im{\operatorname{Im}}
\def\Ind{\operatorname{Ind}}
\def\Int{\operatorname{Int}}
\def\max{\operatorname{max}}
\newcommand{\bbZ}{\mathbb{Z}}
\newcommand{\frg}{\mathfrak{g}}
\newcommand{\fru}{\mathfrak{u}}
\begin{document}

\title{A rigidity result for dimension data}

\author{Jun Yu}

\address{School of Mathematics, Institute for Advanced Study, Einstein Drive, Fuld Hall, Princeton, NJ 08540}
\email{junyu@math.ias.edu}

\abstract{The dimension datum of a closed subgroup of a compact Lie group is a sequence by assigning the invariant
dimension of each irreducible representation restricting to the subgroup. We prove that any sequence of dimension
data contains a converging sequence with limit the dimension datum of a subgroup interrelated to subgroups giving
this sequence. This rigidity has an immediate corollary that the space of dimension data of closed subgroups in a
given compact Lie group is sequentially compact. Moreover, we deduce some known finiteness and rigidity results
from it, as well as give an application to isospectral geometry.} 
\endabstract



\keywords{Dimension datum, rigidity, isospectral geometry.}
\maketitle

\section{Introduction}

Given a compact Lie group $G$, let $\widehat{G}$ be the set of equivalence classes of irreducible
finite-dimensional complex linear representations of $G$. We endow the space \(\bbZ^{\widehat G}\)
with the product topology of the discrete topologies on the factors \(\bbZ\). Given a closed subgroup
$H$ of $G$, the function \[\mathscr{D}_H:\widehat{G}\to\bbZ, V \mapsto \dim V^{H} \] is an element
of \(\bbZ^{\widehat G}\), where $V^{H}$ is the linear space of $H$-invariant vectors in a complex
linear representation $V$ of $G$. We call $\mathscr{D}_H$ the {\it dimension datum} of \(H\).
In this way we have a map \[\mathscr{D}: \{\textrm{closed subgroup of }G\}/(G-\textrm{conjugacy})
\rightarrow\bbZ^{\widehat{G}},\ H\mapsto\mathscr{D}_{H}.\]
Dimension data arise from number theory in the determination of monodromy groups (cf. \cite{Katz}). They
also appear in differential geometry: in relation with the spectra of the  Laplace operators on Riemannian
homogeneous spaces (cf. \cite{Sutton}). Moreover, Langlands has suggested to use the dimension data as a key
ingredient in his program ``Beyond Endoscopy'' (cf. \cite{Langlands}, Page 2, Problem (II)). In the literature,
dimension data have been studied in the articles \cite{Larsen-Pink}, \cite{Larsen} and \cite{An-Yu-Yu}.

Endowing the space \(\bbZ^{\widehat G}\) (identified with the set of all maps from $\widehat{G}$ to $\bbZ$)
with the product topology of the discrete topologies on the factors \(\bbZ\), we could consider the
convergence of sequences of dimension data. Since $0\leq\dim V^{H}\leq \dim V$ for any complex irreducible
representation $V$ of $G$ and any closed subgroup $H$ of $G$, each sequence of dimension data
$\{\mathscr{D}_{H_{i}}:i\geq 1\}$ contains a convergent subsequence as elements in \(\bbZ^{\widehat G}\).
This raises a question: is the limit still a dimension datum $\mathscr{D}_{H}$ and does $\{H_{i}:i\geq 1\}$
give some restriction on $H$? Theorem \ref{T:rigidity} answers this question completely. It states that the
limit must be a dimension datum $\mathscr{D}_{H}$ and the subgroup $H$ is strongly controlled by a subsequence
$\{H_{i_{j}}:j\geq 1\}$. We also give some consequences of Theorem \ref{T:rigidity} by strengthening a theorem
of Larsen and give different proofs of two theorems proving in a joint paper \cite{An-Yu-Yu}, as well as giving
an application to isospectral geometry.

\begin{theorem}\label{T:rigidity}
Given a compact Lie group $G$, let $\{H_{n}|\ n\geq 1\}$ be a sequence of closed subgroups of $G$.
Then there exist two closed subgroups $H,H'$ of $G$ with $[H'_0,H'_0]\subset H\subset H'$ and
$H'$ quasi-primitive, a subsequence $\{H_{n_{j}}|\ j\geq 1\}$ and a sequence
$\{g_{j}|\ j\geq 1, g_{j}\in G\}$, such that
\[\forall j\geq 1, [H'_0,H'_0]\subset g_{j}H_{n_{j}}g_{j}^{-1}\subset H\] and
\[\lim_{j\rightarrow\infty}\mathscr{D}_{H_{n_{j}}}=\mathscr{D}_{H}.\]
\end{theorem}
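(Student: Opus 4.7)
My plan is to translate dimension data into measures on $G$ via Peter--Weyl and then extract weak-$*$ limits. For a closed subgroup $H\subset G$ and $V\in\widehat G$,
\[
\mathscr{D}_H(V) \;=\; \int_H \chi_V(h)\,dh \;=\; \int_G \chi_V\,d\mu_H,
\]
where $\mu_H$ is the pushforward to $G$ of the normalised Haar measure on $H$. Since the irreducible characters form a total family in continuous class functions on $G$, coordinatewise convergence of $\mathscr{D}_{H_n}$ in $\bbZ^{\widehat G}$ is essentially equivalent to weak-$*$ convergence of the associated measures $\mu_{H_n}\to\mu$ in the (weak-$*$-)compact space of Borel probability measures on $G$. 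The theorem then becomes the problem of recognising any such weak limit $\mu$ as $\mu_H$ for a closed subgroup $H$ sandwiched between $[H'_0,H'_0]$ and a quasi-primitive overgroup $H'$, while arranging the same sandwich property for conjugates of the $H_{n_j}$.

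\textbf{Extraction.} I extract in four stages. First, the bound $\dim V^{H_n}\le\dim V$ permits a diagonal extraction giving coordinatewise convergence, and Banach--Alaoglu simultaneously gives $\mu_{H_n}\to\mu$. Second, $\dim H_n$ is bounded by $\dim G$ and, assuming the final conclusion, $|H_n/(H_n)_0|$ is bounded; a further extraction makes both constant. Third, inside the compact Grassmannian $\mathrm{Gr}_d(\frg)$ the Lie algebras $\mathrm{Lie}(H_n)$ have a convergent subsequence; using compactness of $G$ to choose conjugators $g_n\in G$, one arranges $\Ad(g_n)\mathrm{Lie}(H_n)\to\frl$, automatically a Lie subalgebra (the bracket relation is closed), integrating in the compact group to a closed connected subgroup $L$. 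Fourth, exploiting Chabauty-compactness of the space of closed subgroups of $G$, a further extraction gives $g_nH_ng_n^{-1}\to K$ in the Chabauty topology for some closed subgroup $K$ with $K_0\supseteq L$, and the bounded finite quotients $g_nH_ng_n^{-1}/(g_nH_ng_n^{-1})_0$ stabilise to a finite set.

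\textbf{Identification and main obstacle.} With the extractions in place, $\mu$ is $L$-bi-invariant (each $\mu_{g_nH_ng_n^{-1}}$ is bi-invariant under its identity component and $L$ is the limit of these) and is supported on $K$. I expect $\mu=\mu_H$ for $H\subset K$ obtained as the union of $L$ with lifts of the stabilised finite coset set; this $H$ is automatically closed, being a finite union of cosets of $L$. One then takes $H'$ to be the smallest quasi-primitive closed subgroup of $G$ containing $H$, whose existence follows by descending induction on $(\dim,\,\text{component count})$. The containment $[H'_0,H'_0]\subset g_jH_{n_j}g_j^{-1}$ for large $j$ should then come from the rigidity of semisimple subgroups of $G$ up to conjugacy (Dynkin's finiteness): the semisimple parts of the converging Lie algebras $\Ad(g_j)\mathrm{Lie}(H_{n_j})$ eventually equal (not merely approximate) the semisimple part of $\frl$, which contains $\mathrm{Lie}([H'_0,H'_0])$. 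The principal obstacle is precisely the step of recognising $\mu$ as the Haar measure of a \emph{single} closed subgroup rather than as an $L$-invariant measure of more complicated origin; this is resolved by the stabilisation of the finite quotient in the fourth extraction, and it is here that quasi-primitivity of $H'$ must be invoked to rule out spurious enlargements of $K$ that would disrupt the sandwich.
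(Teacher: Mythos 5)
Your weak-$*$/Chabauty strategy does prove the weaker statement that $\Im(\mathscr{D})$ is closed: the space of closed subgroups of a compact $G$ is Chabauty-compact, and for compact groups the map $H\mapsto\mu_H$ is continuous from the Chabauty topology to the weak-$*$ topology (one checks $\mu_{H_n}(V)\geq 1/N$ uniformly for a fixed identity neighbourhood $V$, whence the support of the limit measure is the whole Chabauty limit $K$ and the limit is $\mu_K$); integrating characters then gives $\mathscr{D}_{H_{n_j}}\to\mathscr{D}_K$ with no conjugation needed. This is a genuinely different and arguably cleaner route to Corollaries 1.2--1.3 than the paper's, which instead runs an induction on $\dim G$ through four reductions (primitive closures of the adjoint images, passage to $G/[G_0,G_0]$, the character lattice of the abelian $G_0$, and finally Weil's finiteness theorem for homomorphisms of finite groups).

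However, the actual content of Theorem 1.1 beyond closedness of the image is the sandwich $[H'_0,H'_0]\subset g_jH_{n_j}g_j^{-1}\subset H$ for \emph{all} $j$ in the subsequence, and here your argument has genuine gaps. First, Chabauty convergence $g_jH_{n_j}g_j^{-1}\to K$ only says the groups are eventually contained in every neighbourhood of $K$; it does not give containment in $K$, nor in any fixed closed $H$ with $\mathscr{D}_H=\lim\mathscr{D}_{H_{n_j}}$. The paper obtains the upper containment only because, after its reductions, the groups $H_{n_j}$ (modulo a common normal subgroup $Z'$) are finite of bounded order and hence, by Weil's lemma, conjugate to one another --- so $H$ is literally a conjugate of each $H_{n_j}$; nothing in your extraction scheme produces this. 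Second, your route to $[H'_0,H'_0]\subset g_jH_{n_j}g_j^{-1}$ via rigidity of semisimple subalgebras would at best stabilise the derived algebras of the $(H_{n_j})_0$ up to conjugacy; it does not relate them to the quasi-primitive closure $H'$ of $H$. Indeed ``the smallest quasi-primitive closed subgroup containing $H$'' is not a formal minimality construction: the existence of a quasi-primitive $H'$ with $[H'_0,H'_0]\subset H\subset H'$ is itself Theorem 1.5, imported from \cite{Yu}, and in the paper the lower containment for the $H_{n_j}$ emerges from the Step 1 dichotomy (either descend to a lower-dimensional primitive subgroup and induct, or conclude $[G_0,G_0]\subset H_n$ for all $n$), a mechanism your proposal has no substitute for. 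Third, your second extraction is explicitly circular (``assuming the final conclusion, $|H_n/(H_n)_0|$ is bounded''), and the component counts genuinely need not be bounded (e.g.\ $\bbZ/n\bbZ\subset S^1$), so the ``stabilised finite coset set'' used to build $H$ need not exist as described.
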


This theorem has an immediate corollary.
\begin{corollary}\label{C:DT-closed}
The subset $\Im(\mathscr{D})\subset\bbZ^{\widehat{G}}$ is a closed subset of
$\bbZ^{\widehat{G}}$.
\end{corollary}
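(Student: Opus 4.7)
The plan is to reduce the corollary to a formal consequence of Theorem \ref{T:rigidity}. The topological observation needed is that for a compact Lie group $G$, the set $\widehat{G}$ of equivalence classes of irreducible representations is countable, so the product space $\bbZ^{\widehat{G}}$ with the product of discrete topologies is first countable (in fact metrizable). In particular a subset of $\bbZ^{\widehat{G}}$ is closed if and only if it is sequentially closed, so it suffices to verify sequential closedness of $\Im(\mathscr{D})$.

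To do this, I would start with an arbitrary sequence $\{\mathscr{D}_{H_n}\}_{n\geq 1}$ in $\Im(\mathscr{D})$ that converges to some $f\in\bbZ^{\widehat{G}}$, and show $f\in\Im(\mathscr{D})$. Applying Theorem \ref{T:rigidity} to the sequence $\{H_n\}$ of closed subgroups, I obtain a closed subgroup $H$ of $G$, a subsequence $\{H_{n_j}\}$, and elements $g_j\in G$ satisfying the stated containments, such that $\lim_{j\to\infty}\mathscr{D}_{H_{n_j}}=\mathscr{D}_H$. Since $\bbZ^{\widehat{G}}$ is Hausdorff and every subsequence of a convergent sequence converges to the same limit, one concludes $f=\mathscr{D}_H\in\Im(\mathscr{D})$, which finishes the argument.

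There is essentially no obstacle: all of the real work is absorbed into Theorem \ref{T:rigidity}, and the corollary is a purely formal extraction of a sequentially closed statement from the existence of a convergent subsequence with the desired limit. The only point worth making explicit is the first-countability of the target space, without which passing from sequential to topological closedness would require an extra argument; here it is automatic from countability of $\widehat{G}$.
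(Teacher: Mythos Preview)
Your proposal is correct and matches the paper's own argument: both apply Theorem~\ref{T:rigidity} to a convergent sequence $\{\mathscr{D}_{H_n}\}$, extract the subsequence with limit $\mathscr{D}_H$, and use uniqueness of limits in the Hausdorff space $\bbZ^{\widehat{G}}$ to identify the original limit with $\mathscr{D}_H$. You make explicit the first-countability of $\bbZ^{\widehat{G}}$ needed to pass from sequential to topological closedness, a point the paper leaves implicit.
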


Corollary \ref{C:DT-closed} can be equivalently stated as follows.
\begin{corollary}\label{C:DT-compact}
The topological space $\Im(\mathscr{D})$ is sequentially compact.
\end{corollary}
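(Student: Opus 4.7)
The plan is to derive sequential compactness as an essentially immediate consequence of Theorem \ref{T:rigidity}, and then to verify the claimed equivalence with Corollary \ref{C:DT-closed} by a short topological observation.

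First, I would take an arbitrary sequence in $\Im(\mathscr{D})$ and write its terms as $\mathscr{D}_{H_n}$ for closed subgroups $H_n\subset G$, $n\geq 1$. Applying Theorem \ref{T:rigidity} to the sequence $\{H_n\}_{n\geq 1}$ produces, in particular, a closed subgroup $H\subset G$ and a subsequence $\{H_{n_j}\}_{j\geq 1}$ such that
\[
\lim_{j\to\infty}\mathscr{D}_{H_{n_j}}=\mathscr{D}_{H}
\]
in $\bbZ^{\widehat G}$. Since $\mathscr{D}_H\in\Im(\mathscr{D})$, this exhibits a convergent subsequence of the original sequence whose limit lies in $\Im(\mathscr{D})$, which is exactly the definition of sequential compactness. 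So the bulk of the corollary is just a restatement of Theorem \ref{T:rigidity} with the subgroups $H'$, the conjugators $g_j$, and the intermediate inclusions discarded.

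To justify the asserted equivalence between Corollary \ref{C:DT-closed} (closedness) and Corollary \ref{C:DT-compact} (sequential compactness), I would observe that a compact Lie group is second countable, so $\widehat G$ is countable and $\bbZ^{\widehat G}$ is metrizable. For every closed subgroup $H\subset G$ and every $V\in\widehat G$ we have $0\leq\dim V^{H}\leq\dim V$, hence
\[
\Im(\mathscr{D})\subset K:=\prod_{V\in\widehat G}\{0,1,\dots,\dim V\},
\]
and $K$ is compact by Tychonoff's theorem. In the compact metrizable space $K$, a subset is closed if and only if it is sequentially compact, and since $K$ is closed in $\bbZ^{\widehat G}$, closedness of $\Im(\mathscr{D})$ in $\bbZ^{\widehat G}$ is the same as closedness in $K$. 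This gives the equivalence.

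There is no substantive obstacle: all the real work is absorbed into Theorem \ref{T:rigidity}. Once that theorem is available, the only step that could conceivably be delicate is the metrizability/compactness argument establishing the equivalence, but this is standard once one notes that $\widehat G$ is countable and that each coordinate $\mathscr{D}_H(V)$ is bounded by $\dim V$.
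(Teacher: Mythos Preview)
Your proposal is correct and matches the paper's approach: the paper deduces Corollary~\ref{C:DT-closed} from Theorem~\ref{T:rigidity} by exactly the subsequence argument you give, and then simply asserts that Corollary~\ref{C:DT-compact} is an equivalent restatement. You additionally spell out the equivalence via countability of $\widehat G$ and metrizability of the product, which the paper leaves implicit; this is a welcome clarification but not a genuinely different route.
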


Here the topology on $\Im(\mathscr{D})$ means the subspace topology inherited from the topology on
$\bbZ^{\widehat{G}}$.

Theorem \ref{T:rigidity} is equivalent to the combination of the following two theorems. Theorem
\ref{T:rigidity2} is a statement about dimension data, and Theorem \ref{T:group reduction2} is
an extension of Theorem \ref{T:group reduction} to non-finite closed subgroups.

\begin{theorem}\label{T:rigidity2}
Given a compact Lie group $G$, let $\{H_{n}|\ n\geq 1\}$ be a sequence of closed subgroups of $G$.
Then there exists a closed subgroup $H$ of $G$, a subsequence $\{H_{n_{j}}|\ j\geq 1\}$ and a
sequence $\{g_{j}|\ j\geq 1, g_{j}\in G\}$, such that
\[\forall j\geq 1, [H_0,H_0]\subset g_{j}H_{n_{j}}g_{j}^{-1}\subset H\] and
\[\lim_{j\rightarrow\infty}\mathscr{D}_{H_{n_{j}}}=\mathscr{D}_{H}.\]
\end{theorem}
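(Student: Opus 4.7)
My plan is to reduce Theorem~\ref{T:rigidity2} to the finite-subgroup analogue Theorem~\ref{T:group reduction} by successively pinning down the identity components $(H_n)_0$ along a subsequence.

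\textbf{Stage~1 (compactness and the semisimple part).} Pass to a subsequence so that $\mathscr{D}_{H_n}\to D$ in $\bbZ^{\widehat G}$, using the bound $\dim V^{H_n}\le\dim V$. Using compactness of the Grassmannian of $\frg$, further extract so that both $\frh_n:=\Lie(H_n)$ and the derived subalgebras $[\frh_n,\frh_n]$ converge. Since there are only finitely many $G$-conjugacy classes of semisimple subalgebras of $\frg$, one passes to a further subsequence and chooses $g_n\in G$ making $g_n[\frh_n,\frh_n]g_n^{-1}=\frs$ a single fixed semisimple subalgebra. Replacing $H_n$ by $g_nH_ng_n^{-1}$ and writing $S:=\exp(\frs)$, one has $S=[(H_n)_0,(H_n)_0]\subseteq H_n$, and since $S$ is characteristic in $(H_n)_0$, we have $H_n\subseteq N:=N_G(S)$.

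\textbf{Stage~2 (fixing the identity component modulo $S$).} In $\overline N:=N/S$, first conjugate within $\overline N_0$ (lifting to $N$ and absorbing into $g_n$) to place all the abelian identity components $(H_n)_0/S$ in a single fixed maximal torus $\overline T\subseteq\overline N_0$, using the conjugacy of maximal tori in a compact connected Lie group. Pass to a further subsequence so that these subtori Hausdorff-converge inside $\overline T$ to a closed subtorus $\overline H^\sharp\subseteq\overline T$. A quotient-torus argument now yields $(H_n)_0/S\subseteq\overline H^\sharp$ for all $n$ large: the image of $(H_n)_0/S$ in the quotient torus $\overline T/\overline H^\sharp$ is a connected subgroup whose diameter tends to zero, hence is trivial. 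Let $H^\sharp\subseteq N$ be the preimage of $\overline H^\sharp$; this is a fixed closed connected subgroup with $[H^\sharp,H^\sharp]=S$ and $(H_n)_0\subseteq H^\sharp$.

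\textbf{Stage~3 (finite-subgroup reduction).} After further minor adjustments to ensure $H_n\subseteq N_G(H^\sharp)$ (by refining Stage~2 so the component groups $H_n/(H_n)_0$ also stabilize $\overline H^\sharp$, or by slightly enlarging $H^\sharp$), the quotients $F_n:=H_nH^\sharp/H^\sharp$ are finite subgroups of the compact Lie group $N_G(H^\sharp)/H^\sharp$. Applying Theorem~\ref{T:group reduction} to $\{F_n\}$ yields a subsequence, further conjugations lifted to $N_G(H^\sharp)$, and a finite subgroup $F$ with $F_{n_j}\subseteq F$ and $\mathscr{D}_{F_{n_j}}\to\mathscr{D}_F$. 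Let $H\subseteq G$ be the preimage of $F$ in $N_G(H^\sharp)$: then $H_0=H^\sharp$, $[H_0,H_0]=S$, and $S\subseteq g_jH_{n_j}g_j^{-1}\subseteq H$. A final short argument --- using the integer-valued stabilization of $\mathscr{D}_{H_n}(V)$ together with $(H_n)_0\subseteq H^\sharp$ --- establishes the identity $V^{H_n}=V^{H_nH^\sharp}$ eventually for each $V\in\widehat G$, upgrading the convergence $\mathscr{D}_{H_nH^\sharp}\to\mathscr{D}_H$ (which follows from $\mathscr{D}_{F_{n_j}}\to\mathscr{D}_F$) to $\mathscr{D}_{H_{n_j}}\to\mathscr{D}_H$.

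The principal obstacle is Stage~2 together with the eventual-normalization refinement in Stage~3: forcing the identity components, and then the whole $H_n$, into fixed ambient subgroups rather than merely Hausdorff-convergent families. The semisimple part is controlled by finiteness of $G$-conjugacy classes of semisimple subalgebras, but the abelian (torus modulo $S$) part is only Hausdorff-convergent a priori. The decisive device is the quotient-torus containment argument: a subtorus of $\overline T$ sufficiently Hausdorff-close to a fixed subtorus $\overline H^\sharp$ must be contained in $\overline H^\sharp$, because its image in $\overline T/\overline H^\sharp$ is both connected and of arbitrarily small diameter, hence trivial. Analogous local-geometry arguments handle the eventual normalization of $H^\sharp$ by $H_n$ and the final identity $V^{H_n}=V^{H_nH^\sharp}$.
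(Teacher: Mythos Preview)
Your Stages~1--2 are basically sound and give a more geometric route than the paper's: finiteness of conjugacy classes of semisimple subalgebras pins down $S$, and the no-small-subgroups argument in the quotient torus $\overline T/\overline H^\sharp$ does yield $(H_n)_0/S\subset\overline H^\sharp$ eventually. The paper instead uses the adjoint map $G\to\Aut([\frg_0,\frg_0])$ together with Lemma~\ref{L:primitive closure} and Proposition~\ref{P:primitive finite} to reduce (by induction on $\dim G$) to $[G_0,G_0]\subset H_n$, and then quotients by $[G_0,G_0]$ to make $G_0$ abelian.

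The genuine gap is Stage~3. Theorem~\ref{T:group reduction} concerns a \emph{single} finite subgroup and produces a quasi-primitive overgroup; it says nothing about convergence of dimension data along a sequence $\{F_n\}$, so it cannot deliver ``$F_{n_j}\subset F$ and $\mathscr D_{F_{n_j}}\to\mathscr D_F$''. What you really need here is the very theorem you are proving, applied to the $F_n$ inside $M=N_G(H^\sharp)/H^\sharp$. That is fine by induction on $\dim G$ when $\dim H^\sharp>0$, but your scheme gives no reduction at all when every $H_n$ is finite: then $(H_n)_0=\{e\}$, hence $S=\{e\}$, $H^\sharp=\{e\}$, $M=G$, and you loop. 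Take $G=T^1$ and $H_n=\{k/n:0\le k<n\}$; here $\mathscr D_{H_n}\to\mathscr D_{T^1}$, yet your Stages~1--3 return the original problem unchanged. The paper's decisive device, which you are missing, is its Step~3: one controls $H_n\cap G_0$ (not merely $(H_n)_0$) via the sublattice $L'=\{\rho\in\widehat{G_0}:\lim_n\dim\rho^{H_n\cap G_0}=1\}$, sets $Z'=\bigcap_{\rho\in L'}\ker\rho$, shows $H_n\cap G_0\subset Z'$ eventually, and replaces $H_n$ by $H_nZ'$. After quotienting by $Z'$ one gets finite groups of order bounded by $|G/G_0|$, so Lemma~\ref{L:group finite} applies. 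Without a mechanism of this kind, the orders $|F_n|$ in your Stage~3 are unbounded and no finiteness result is available. The hand-waved ``minor adjustments'' (eventual normalization $H_n\subset N_G(H^\sharp)$ and the identity $V^{H_n}=V^{H_nH^\sharp}$) are also not obviously minor, but they are secondary to this structural gap.
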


\begin{theorem}\label{T:group reduction2}
Given a compact Lie group $G$ and a closed subgroup $H$, there exist a quasi-primitive subgroup
$H'$ of $G$ such that $[H'_0,H'_0]\subset H\subset H'$.
\end{theorem}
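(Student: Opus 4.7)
My plan is to apply Zorn's lemma to the collection
\[
\mathcal{S} := \{K \subset G : K \text{ is a closed subgroup},\ H \subset K,\ [K_0, K_0] \subset H\},
\]
ordered by inclusion. The set is nonempty (it contains $H$), and any maximal element $H'$ automatically satisfies $H \subset H'$ and $[H'_0, H'_0] \subset H$, leaving only quasi-primitivity to verify.

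The key structural observation I would record first: for any $K \in \mathcal{S}$, the commutator $[K_0, K_0]$ is connected semisimple and normal in $K_0 \supset H_0$, hence normal in $H_0$; combined with $[K_0, K_0] \supset [H_0, H_0] =: S$ and the fact that $H_0 = S \cdot Z(H_0)_0$ admits no connected semisimple normal subgroup strictly larger than $S$, this forces $[K_0, K_0] = S$. So $K_0 = S \cdot T_K$ with $T_K \subset Z_G(S)_0$ a central torus, $K \subset N_G(S)$, and $\dim K_0 \le \dim S + \dim Z_G(S)_0$. Using this uniform bound, I would verify the chain hypothesis for Zorn: given a chain $\{K_\alpha\}$ in $\mathcal{S}$, the ascending family of Lie algebras $\Lie(K_{\alpha, 0}) \subset \frg$ stabilizes, so the identity components $K_{\alpha, 0}$ stabilize at some $K_0^* = S \cdot T^*$. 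Taking $K_\infty := \overline{\bigcup_\alpha K_\alpha}$, a closed subgroup containing every $K_\alpha$ and $H$, the inclusion $K_\infty \subset N_G(S)$ forces $(K_\infty)_0 \subset N_G(S)_0 = S \cdot Z_G(S)_0$. The required containment $[(K_\infty)_0, (K_\infty)_0] \subset H$ then reduces, after noting $[(K_\infty)_0, (K_\infty)_0] \subset S \cdot [Z_G(S)_0, Z_G(S)_0]$, to showing that the image $(K_\infty)_0/S$ in $N_G(S)/S$ is abelian.

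Once Zorn delivers a maximal $H' \in \mathcal{S}$, quasi-primitivity follows quickly: if $K \supsetneq H'$ were closed with $[K_0, K_0] \subset H'$, applying the structural step with $H'$ in place of $H$ forces $[K_0, K_0] = [H'_0, H'_0] = S$, which lies in $H$ since $H' \in \mathcal{S}$; thus $K \in \mathcal{S}$, contradicting the maximality of $H'$. I expect the main obstacle to be the abelianness of $(K_\infty)_0/S$ in the Chabauty limit. An ascending family of closed subgroups can acquire a strictly larger identity component in the limit than in any single term (as in the archetype $\bigcup_n \bbZ/n$ dense in $S^1$), so I must show the extra identity-component growth in $K_\infty$ stays within a torus. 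This amounts to analyzing ascending chains of closed subgroups of the compact Lie group $N_G(S)/S$ with torus identity components and showing that the Chabauty limit retains a torus identity component; Jordan's theorem bounding the abelian index in finite subgroups of compact Lie groups is the expected technical tool for controlling the finite-component contribution.
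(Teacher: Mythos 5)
There is a genuine gap, and it sits at the very end of your argument: maximality of $H'$ in $\mathcal{S}$ is not the notion of quasi-primitivity used in this paper. By Definition \ref{D:primitive}, $H'$ is Lie quasi-primitive in $G$ only if one can exhibit a descending chain $G=H_{0}\supset H_{1}\supset\cdots\supset H_{s}=H'$ in which each $H_{i+1}$ is Lie primitive in $H_{i}$, and Lie primitivity of $H_{i+1}$ in $H_{i}$ is a condition on \emph{every} closed intermediate $K$ whose identity component normalizes $(H_{i+1})_{0}$ --- namely that $[H_{i}:K]$ or $[K:H_{i+1}]$ be finite --- together with the requirement $(A_{H_{i}})_{0}\subset H_{i+1}$. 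Your maximality argument only excludes closed $K\supsetneq H'$ with $[K_{0},K_{0}]\subset H'$; it says nothing about the (typically many) $K$ with $K_{0}$ normalizing $H'_{0}$ but $[K_{0},K_{0}]\not\subset H'$, for which the finiteness dichotomy must still be verified, and it constructs no chain at all. The two notions genuinely differ: for instance the trivial subgroup of $\SU(2)$ is quasi-primitive (via $\SU(2)\supset N(T)\supset\{1\}$) yet far from maximal in your sense, so one cannot simply identify "maximal in $\mathcal{S}$" with "quasi-primitive" without an argument. The route the paper relies on is the opposite of yours: a descent in the spirit of Theorem \ref{T:group reduction} and the reduction steps in the proof of Theorem \ref{T:rigidity}, where Lemma \ref{L:primitive closure} repeatedly places the subgroup inside a primitive subgroup of strictly smaller dimension; the chain produced by that recursion is precisely the witness demanded by the definition, and the containment $[H'_{0},H'_{0}]\subset H$ is checked at the terminal stage. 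Your top-down Zorn construction buys a clean statement of the containments but skips exactly the part of the theorem that carries content, namely primitivity at each step of a chain.

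A second, smaller gap is the one you flag yourself: the chain condition for Zorn's lemma. Your structural reduction correctly forces every $K\in\mathcal{S}$ into $N_{G}(S)$ with $K_{0}/S$ a torus, but the closure of an increasing union of quasi-tori in $N_{G}(S)/S$ can acquire a strictly larger identity component, and the assertion that this limit identity component is again a torus (so that $[(K_{\infty})_{0},(K_{\infty})_{0}]\subset S\subset H$) is a generalized Jordan-type statement that you defer rather than prove. It is true, but it is not a formality, and as written $\mathcal{S}$ has not been shown to be inductively ordered. Until both points are repaired --- above all the first --- the proposal does not prove the theorem.
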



\noindent{\it Notation and conventions.}
Given a compact Lie group $G$,
\begin{enumerate}
\item let $G_0$ be the connected component of $G$ containing the identity element and
$[G,G]$ the commutator subgroup.
\item Write $G^{\natural}$ for the set of conjugacy classes in $G$.
\item Denote by $\mu_{G}$ the unique Haar measure on $G$ with $\int_{G} 1\mu_{G}=1$.
\item Write $\widehat{G}$ for the set of equivalence classes of irreducible finite-dimensional complex
linear representations of $G$.
\item Denote by $V^{G}$ the subspace of $G$-invariant vectors in a complex linear representation
$V$ of $G$.
\end{enumerate}

\noindent{\bf Acknowledgements.} The author thanks Jiu-Kang Yu, Jinpeng An, Brent Doran and Lars K\"uhne for
useful discussions and helpful suggestions. The influence of Michael Larsen's work in \cite{Larsen}
on this article is also obvious to the reader.

\section{Primitive and quasi-primitive subgroups}

In \cite{Yu}, we define primitive and quasi-primitive subgroups and prove a generalization of
a theorem of Borel and Serre by using them. In this section, we recall the definition and
some results in that paper. Lemma \ref{L:primitive closure}, Proposition \ref{P:primitive finite}
and Lemma \ref{L:group finite} are used in the proof of the main theorem of this paper. Lemma
\ref{L:group finite} is due to A. Weil (cf. \cite{}).

\begin{definition}\label{D:primitive}
A closed subgroup $H$ of a compact Lie group $G$ is called Lie primitive if $(A_{G})_{0}\subset H$
and for any closed subgroup $K$ of $G$ containing $H$ with $K_0$ normalizing $H_0$, either $[G:K]$ is
finite or $[K:H]$ is finite. It is called Lie quasi-primitive if there exists a sequence of closed subgroups
of $G$, $G=H_{0}\supset H_{1}\supset...\supset H_{s}=H$, such that each $H_{i+1}$ is
Lie primitive in $H_{i}$, $0\leq i\leq s-1$.
\end{definition}

\begin{theorem} \label{T:group reduction}
Given a finite subgroup $S$ of a compact Lie group $G$, there exists a Lie quasi-primitive subgroup $H$
of $G$ containing $S$ and satisfying that:
\begin{itemize}
\item[(1)] {$H_{0}$ is a Cartan subgroup of $G$, or}
\item[(2)] {$H_0$ is not abelian and $S A_{H}/A_{H}$ is a Lie primitive subgroup of $H/A_{H}$.}
\end{itemize}
\end{theorem}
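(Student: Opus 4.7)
The plan is to prove the statement by strong induction on the semisimple dimension $\dim G_0 - \dim A_G$, descending from $G$ through a chain of Lie primitive subgroups each containing $S$, and halting at the first subgroup in the chain that satisfies (1) or (2).

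When $G_0$ is abelian (in particular when $\dim G = 0$) the group $G_0$ is a maximal torus of $G$ and coincides with $A_G$, so I take $H = G$: the length-zero chain exhibits $H$ as Lie quasi-primitive in $G$ and $H_0 = G_0$ is a Cartan subgroup of $G$, giving (1). In the inductive step $G_0$ is nonabelian. If $SA_G/A_G$ is already Lie primitive in $G/A_G$, the choice $H = G$ satisfies (2). Otherwise I use that $A_G$ is connected and $S$ is finite so $(SA_G/A_G)_0 = \{e\}$; the failure of Lie primitivity then produces a proper closed subgroup of $G/A_G$ strictly containing $SA_G/A_G$, and among all such proper subgroups I select one $\bar K$ of maximum dimension. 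Any strictly larger proper closed subgroup of $G/A_G$ would share the dimension of $\bar K$ and hence be a finite-index extension of it, so $\bar K$ is Lie primitive in $G/A_G$. Pulling back through $\pi\colon G\to G/A_G$, the preimage $K := \pi^{-1}(\bar K)$ is a proper Lie primitive subgroup of $G$ containing $S$ with $\dim K < \dim G$ and hence strictly smaller semisimple dimension. Applying the inductive hypothesis to $(K, S)$ gives a Lie quasi-primitive $H \subset K$ containing $S$ that satisfies (1) or (2) relative to $K$; concatenating $G \supset K$ with the chain from $K$ down to $H$ exhibits $H$ as Lie quasi-primitive in $G$.

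The main obstacle is transferring the dichotomy from $K$ to $G$. Condition (2) is intrinsic to $H$ and $S$ and so lifts automatically, but condition (1) relative to $K$ only asserts that $H_0$ is a maximal torus of $K_0$, which need not be a maximal torus of $G_0$ because $\rank K_0$ may drop under the descent. To remedy this I would refine the inductive choice of $K$ by additionally requiring $K_0$ to contain a maximal torus of $G$ whenever such a proper closed subgroup containing $SA_G$ exists; then any Cartan of $K$ is automatically a Cartan of $G$ and (1) transfers cleanly. In the remaining case, when no proper full-rank closed subgroup of $G$ contains $SA_G$, the finite set $S$ cannot normalize any maximal torus of $G$ and alternative (1) cannot be achieved directly; the argument must then verify that the descent always terminates in case (2), by showing that no stage of the chain can land at an abelian identity component which is a proper subtorus of a Cartan of $G$. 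This final bookkeeping, carried out via Lemma \ref{L:primitive closure} and Lemma \ref{L:group finite} to control the structure of intermediate Lie primitive subgroups and their finite quotients, is the most delicate step of the argument.
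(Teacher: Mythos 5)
First, a point of reference: this paper does not actually prove Theorem \ref{T:group reduction}. Section 2 states explicitly that the definitions and results there are recalled from \cite{Yu}, so there is no in-paper proof to measure your argument against; I can only judge the attempt on its own terms. Your overall scheme (descend through a chain of Lie primitive subgroups containing $S$, produced by taking an infinite-index closed subgroup of $G/A_G$ containing $SA_G/A_G$ of maximal dimension and pulling it back) is the natural one, and the primitivity of that $\bar K$ is essentially right under the charitable reading that ``proper'' means ``of infinite index'' --- though you never verify the condition $(A_{G/A_G})_0\subset\bar K$ required by Definition \ref{D:primitive}, you assume without justification that $A_G$ is connected (the notation $(A_G)_0$ in the definition signals that it need not be), and you do not check that your induction quantity $\dim K_0-\dim A_K$ actually decreases.

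The genuine gap, however, is at the crux, and you half-acknowledge it yourself. The entire content of the theorem is the terminal dichotomy: the chain must end either at an $H$ with $H_0$ a Cartan subgroup of the \emph{original} $G$, or at an $H$ with $H_0$ nonabelian and $SA_H/A_H$ Lie primitive in $H/A_H$. Your induction delivers alternative (1) only \emph{relative to} $K$, and, as you correctly observe, a Cartan subgroup of $K$ need not be one of $G$ because the rank can drop along the descent. Your proposed repair --- additionally insisting that $K_0$ contain a maximal torus of $G$ whenever possible --- is both unproven and in direct tension with the maximal-dimension choice you needed two sentences earlier to make $\bar K$ Lie primitive: a full-rank infinite-index subgroup containing $SA_G/A_G$ need not exist, and when it does it need not be of maximal dimension, so you cannot impose both constraints simultaneously without a new argument. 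In the remaining case you write that ``the argument must then verify'' that the chain terminates in case (2); but that verification \emph{is} the theorem, and invoking Lemma \ref{L:group finite} there is a red herring --- finiteness of conjugacy classes of homomorphisms $S\to G$ says nothing about where a single descending chain lands. What is missing is a concrete mechanism (for instance a rank-controlled choice of primitive overgroup in the spirit of Lemma \ref{L:primitive closure}) ruling out termination at an $H$ with $H_0$ a proper subtorus of a maximal torus of $G$. As written, your argument only shows that $S$ lies in \emph{some} Lie quasi-primitive subgroup, which is strictly weaker than the stated dichotomy.
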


\begin{lemma} \label{L:primitive closure}
Given a compact semisimple Lie group $G$ and a non-primitive closed subgroup $S$, there exists a
primitive subgroup $H$ of $G$ containing $S$ and such that $\dim H < \dim G$, and $S_0$ is a proper
normal subgroup of $H_0$.
\end{lemma}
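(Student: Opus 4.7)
The plan is to construct $H$ as a maximal element of a suitable family of intermediate closed subgroups and then exploit maximality to force primitivity. Because $G$ is semisimple, the clause $(A_G)_0 \subset H$ in the definition of Lie primitivity is automatic, so non-primitivity of $S$ yields a closed subgroup $K \subset G$ with $S \subsetneq K$, $K_0$ normalizing $S_0$, and both $[G:K]$ and $[K:S]$ infinite. Since closed subgroups of the compact group $G$ have only finitely many connected components, these infinite indices are equivalent to strict dimension inequalities $\dim S < \dim K < \dim G$. In particular the closed subgroup $K \cap N_G(S_0)$ contains $S$, is contained in $N_G(S_0)$, and has dimension strictly between $\dim S$ and $\dim G$ (its identity component contains $K_0$, which is already in $N_G(S_0)$).

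Let $\caG := \{K' \text{ closed in } G : S \subset K' \subset N_G(S_0),\ \dim K' < \dim G\}$. By the previous paragraph, $\caG$ is nonempty and contains elements of dimension strictly greater than $\dim S$. I would pick $H \in \caG$ of maximal dimension; among those I would enlarge $H$ to be maximal under inclusion as well, using that at fixed identity component $H_0$ the group $H$ sits inside $N_{N_G(S_0)}(H_0)$ and can be replaced by the largest such extension still lying in $\caG$. By construction $S \subset H$, $\dim H < \dim G$, and $H \subset N_G(S_0)$, so $H$ (and in particular $H_0$) normalizes $S_0$. Since $\dim H > \dim S$, we have $S_0 \subsetneq H_0$, so $S_0$ is a proper normal subgroup of $H_0$.

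It remains to show $H$ is primitive in $G$. Suppose not: there exists a closed $K' \supsetneq H$ with $K'_0$ normalizing $H_0$ and both $[G:K']$ and $[K':H]$ infinite, hence $\dim H < \dim K' < \dim G$. Put $K'' := K' \cap N_G(S_0)$; then $H \subset K'' \subset N_G(S_0)$ and $\dim K'' \le \dim K' < \dim G$, so $K'' \in \caG$. If $K'' \supsetneq H$ strictly, we contradict maximality of $H$ in $\caG$. The substantive case is $K'' = H$: there $K'_0 \not\subset N_G(S_0)$, so conjugation by $K'_0$ moves $S_0$ through a nontrivial connected family of ideals of $\Lie H_0$ of the same dimension. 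Writing $\Lie H_0 = \frz \oplus [\Lie H_0, \Lie H_0]$, ideals of given dimension form a disjoint union of toral Grassmannians in $\frz$ paired with discrete choices of sums of simple ideals; connectedness of $K'_0$ traps its orbit in a single component and forces motion only in the toral direction.

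The main obstacle is converting this last orbit analysis into an actual contradiction. The natural approach is to pass to the connected closed subgroup of $H_0$ generated by the $K'_0$-orbit of $S_0$: this is a normal connected subgroup of $H_0$ that is normalized by $K'_0$, and combining it with $H$ should produce an element of $\caG$ strictly larger than $H$, contradicting maximality; carrying this out rigorously requires tracking the toral Grassmannian orbit and ruling out pathologies where the saturation escapes $N_G(S_0)$. The degenerate case $N_G(S_0) = G$ (which by semisimplicity of $G$ forces $S_0$ to be a product of simple factors of $G$) is handled separately and more simply by taking $H$ to be any maximal proper closed subgroup of $G$ containing $S$, using the non-primitivity witness to ensure $\dim H > \dim S$.
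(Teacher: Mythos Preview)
The paper does not actually prove this lemma: Section~2 only \emph{recalls} it from \cite{Yu}, so there is no ``paper's own proof'' to compare against. I will therefore assess your argument directly.

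Your overall strategy---choose $H$ maximal in the family $\caG=\{K'\ \text{closed}: S\subset K'\subset N_G(S_0),\ \dim K'<\dim G\}$ and deduce primitivity from maximality---is sound, and your reductions in the first two paragraphs are correct. The problem is that you treat the case $K''=K'\cap N_G(S_0)=H$ as a ``substantive case'' requiring an orbit analysis on a toral Grassmannian, and you leave that analysis unfinished. In fact this case is \emph{empty}, and you are one short observation away from a complete proof.

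Here is the missing step. Since $K'_0$ normalizes $H_0$, it normalizes the characteristic subgroup $T:=Z(H_0)_0$, which is a torus; but $\Aut(T)\cong\GL(\dim T,\bbZ)$ is discrete and $K'_0$ is connected, so $K'_0$ \emph{centralizes} $T$ and hence acts trivially on $\frz=\Lie T$. Likewise, $K'_0$ acts on $\frh_0$ by Lie algebra automorphisms and therefore permutes the finitely many simple ideals $\frs_1,\dots,\frs_k$ of $\frh_0$; connectedness again forces the permutation to be trivial, so $K'_0$ stabilizes each $\frs_i$. Writing $\Lie S_0=\frz_S\oplus\bigoplus_{i\in I_S}\frs_i$ with $\frz_S\subset\frz$, both summands are $K'_0$-stable, hence $K'_0\subset N_G(S_0)$. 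This gives $K'_0\subset K''=H$, so $\dim K'=\dim K'_0\le\dim H$, contradicting $\dim K'>\dim H$. Thus the case $K''=H$ cannot occur, and your maximality argument already forces $H$ to be primitive. The ``toral Grassmannian orbit'' you worry about is a single point.

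Two minor remarks. First, the existence of a maximal element of $\caG$ deserves a line: once $d=\max\{\dim K':K'\in\caG\}$ is attained by some $H$, every $K'\in\caG$ containing $H$ has $K'_0=H_0$, hence lies in $N_G(H_0)\cap N_G(S_0)$; by the argument above this intersection has identity component $H_0$, so its quotient by $H_0$ is finite and a maximal $H$ exists. Second, your separate treatment of the degenerate case $N_G(S_0)=G$ is unnecessary once the main argument is completed as above.
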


\begin{proposition} \label{P:primitive finite}
Given a compact Lie group $G$, there exist only finitely many conjugacy classes of Lie primitive
and Lie quasi-primitive subgroups of $G$.
\end{proposition}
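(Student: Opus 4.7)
The plan is to prove finiteness of conjugacy classes of Lie primitive subgroups by induction on $\dim G$, and then to bootstrap to Lie quasi-primitive subgroups by bounding the length of the defining filtration together with the primitive finiteness at each stage. The base case $\dim G = 0$ is immediate since $G$ is finite. For a Lie primitive $H$ in $G$, applying Definition~\ref{D:primitive} with $K = N_G(H_0)$ (whose identity component automatically normalizes $H_0$) yields a dichotomy: either $[G : N_G(H_0)] < \infty$, equivalently $H_0 \trianglelefteq G_0$ (the ``normal case''), or $[N_G(H_0) : H] < \infty$, equivalently $H_0 = N_G(H_0)_0$ (the ``self-normalizing case'').

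In the normal case, the containment $(A_G)_0 \subset H_0$ forces $H_0$ to be one of finitely many connected normal subgroups of $G_0$ containing the connected center, indexed by subsets of the finite set of simple factors of $G_0/(A_G)_0$. Fixing such an $H_0$ up to conjugation and passing to the finite-index subgroup $N_G(H_0)$ (inside which $H_0$ becomes normal), consider the quotient $\bar G := G/H_0$. If $H_0 \neq 1$ then $\dim \bar G < \dim G$, the image $\bar H := H/H_0$ is a finite subgroup of $\bar G$, and primitivity of $H$ in $G$ descends cleanly to primitivity of $\bar H$ in $\bar G$: any intermediate $\bar K \supset \bar H$ pulls back to a $K \supset H$ with $H_0 \subset K_0$, so $K_0$ automatically normalizes $H_0$, and the finite-index alternatives then descend to the quotient. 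Induction bounds the conjugacy classes of $\bar H$, hence of $H$. The degenerate subcase $H_0 = 1$ forces $G$ to be semisimple with $H$ a finite Lie primitive subgroup, which is handled by the classical Jordan-type finiteness theorem for finite primitive subgroups of compact semisimple Lie groups.

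In the self-normalizing case, Lemma~\ref{L:primitive closure} is the key input: applied iteratively to $H_0 \cap [G_0, G_0]$ inside the compact semisimple group $[G_0, G_0]$, it produces a primitive enveloping subgroup of strictly smaller dimension, and the inductive hypothesis applied to that overgroup shows such $H_0$ lie in finitely many conjugacy classes. Once $H_0$ is fixed, $N_G(H_0)/H_0$ is finite, so $[N_G(H_0) : H] < \infty$ restricts $H/H_0$ to finitely many subgroups. For a Lie quasi-primitive $H$ defined by a chain $G = H^{(0)} \supset H^{(1)} \supset \cdots \supset H^{(s)} = H$ with each step Lie primitive, the pair $(\dim H^{(i)}, |H^{(i)}/H^{(i)}_0|)$ strictly drops in lexicographic order at each stage, so $s$ is bounded in terms of $G$ alone; combined with the primitive finiteness at each step, this yields the conclusion. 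The main obstacle I expect is the self-normalizing case, where one must apply Lemma~\ref{L:primitive closure} carefully to a suitable overgroup and verify that the inductive reduction lands in a genuinely smaller ambient semisimple group; the descent of primitivity to $\bar G$ in the normal case is a secondary subtlety requiring careful bookkeeping of component groups.
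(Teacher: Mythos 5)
You should know at the outset that the paper contains no proof of Proposition \ref{P:primitive finite}: Section 2 only recalls it from \cite{Yu}, so your argument can only be judged on its own terms. The opening move is sound --- taking $K=N_G(H_0)$ in Definition \ref{D:primitive} does give the dichotomy $H_0\trianglelefteq G_0$ versus $H_0=N_G(H_0)_0$ --- but the self-normalizing branch is circular as written. Lemma \ref{L:primitive closure} applies only to a \emph{non-primitive} subgroup of a compact semisimple group, and the enveloping subgroup it produces is a primitive subgroup of that \emph{same} ambient group. When $G$ is connected semisimple, $[G_0,G_0]=G$, so ``the inductive hypothesis applied to that overgroup'' is precisely the finiteness of conjugacy classes of primitive subgroups of $G$ --- the statement being proved. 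To make this work you would need a second induction (e.g.\ on $\dim G-\dim H$, treating the primitive subgroups of maximal dimension first), plus a separate treatment of the case where the subgroup fed to Lemma \ref{L:primitive closure} is already primitive, where the lemma yields no dimension drop at all.

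The normal case also contains a false step: connected normal subgroups of $G_0$ containing $(A_G)_0$ are \emph{not} indexed by subsets of the simple factors unless $Z(G_0)_0=(A_G)_0$. If $G_0$ has a central torus strictly larger than $(A_G)_0$, then every intermediate subtorus gives such a subgroup and there are infinitely many of them; your argument says nothing about why only finitely many can occur as $H_0$ for a Lie primitive $H$, and this is exactly where the real content of the proposition lies. Two further, smaller gaps: (i) the degenerate case $H_0=1$ does not force $G$ to be semisimple (it only forces $(A_G)_0=1$), and ``Jordan-type finiteness'' is not enough by itself --- Jordan bounds the index of an abelian normal subgroup, but to pass from a bound on the orders of Lie primitive finite subgroups to finitely many \emph{conjugacy classes} one needs Weil's rigidity statement, Lemma \ref{L:group finite}, which is precisely why the paper lists it among the ingredients; (ii) your lexicographic bound on the chain length $s$ uses $|H^{(i)}/H^{(i)}_0|$, which is not bounded in terms of $G$ (finite subgroups of a connected group can have arbitrarily many components), so $s$ is not ``bounded in terms of $G$ alone''; the finiteness of quasi-primitive classes should instead come from a K\"onig-type argument: the tree of strictly decreasing primitive chains is finitely branching up to conjugacy and has no infinite paths, hence is finite.
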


\begin{lemma}\label{L:group finite}
Given a finite group $S$ and a compact semisimple Lie group $G$, the number of conjugacy classes
of homomorphisms from $S$ to $G$ is finite.
\end{lemma}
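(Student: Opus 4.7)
The plan is to regard $\Hom(S,G)$ as a compact real-analytic variety on which $G$ acts by conjugation, and to prove that every conjugation orbit is open in $\Hom(S,G)$, so that compactness forces the number of orbits to be finite.

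For compactness, choose any enumeration $s_{1},\dots,s_{m}$ of $S$ and embed
\[\Hom(S,G)\hra G^{m},\qquad \phi\mapsto(\phi(s_{1}),\dots,\phi(s_{m})).\]
The image is closed, being cut out by the finitely many multiplication relations $\phi(s_{i})\phi(s_{j})=\phi(s_{i}s_{j})$. Since $G^{m}$ is compact, so is $\Hom(S,G)$.

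Next, I would fix $\phi\in\Hom(S,G)$ and equip $\frg:=\Lie(G)$ with the $S$-module structure given by $\Ad\circ\phi$. A first-order deformation of $\phi$ has the form $\phi_{u}(s)=\exp(u\xi(s))\phi(s)+O(u^{2})$, and it preserves the homomorphism relation precisely when $\xi\colon S\to\frg$ is a $1$-cocycle,
\[\xi(st)=\xi(s)+\Ad(\phi(s))\xi(t),\]
so the Zariski tangent space $T_{\phi}\Hom(S,G)$ is contained in $Z^{1}(S,\frg)$. The tangent space to the conjugation orbit $G\cdot\phi$ is the subspace $B^{1}(S,\frg)$ of coboundaries $s\mapsto X-\Ad(\phi(s))X$, $X\in\frg$. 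Because $|S|$ is invertible in $\bbR$, averaging kills $H^{1}$: given a cocycle $\xi$, the element $\eta=\tfrac{1}{|S|}\sum_{t\in S}\xi(t)$ satisfies $\xi(s)=\eta-\Ad(\phi(s))\eta$, exhibiting $\xi$ as a coboundary. Hence $Z^{1}(S,\frg)=B^{1}(S,\frg)$.

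Consequently, the orbit map $G/Z_{G}(\phi(S))\to\Hom(S,G)$, $gZ_{G}(\phi(S))\mapsto g\phi g^{-1}$, is an immersion whose image already exhausts the Zariski tangent space of $\Hom(S,G)$ at $\phi$. Since the local dimension of the real-analytic set $\Hom(S,G)$ at $\phi$ is bounded above by $\dim Z^{1}(S,\frg)=\dim B^{1}(S,\frg)=\dim(G\cdot\phi)$, invariance of domain implies that $G\cdot\phi$ is open in $\Hom(S,G)$. The compact space $\Hom(S,G)$ is therefore the disjoint union of open orbits, and hence of only finitely many; each such orbit is precisely a $G$-conjugacy class of homomorphisms $S\to G$. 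The main obstacle lies in this last step: passing from the infinitesimal identity $Z^{1}=B^{1}$ to the topological statement that the orbit is open, which is where the real-analytic structure of $\Hom(S,G)$ must be used (to bound its local dimension by that of its Zariski tangent space) before invariance of domain on $G/Z_{G}(\phi(S))$ can be applied.
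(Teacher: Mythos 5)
Your argument is correct in substance, and it is worth noting that the paper itself offers no proof of this lemma: it is quoted as a known result of A.~Weil (the reference is to \emph{Remarks on the cohomology of groups}, Ann.\ of Math.\ 1964), and what you have written is essentially Weil's own local-rigidity argument --- compactness of $\Hom(S,G)$ inside $G^{m}$, the identification of the tangent directions with $Z^{1}(S,\frg)$, the averaging trick showing $H^{1}(S,\frg)=0$ because $|S|$ is invertible, and the conclusion that each conjugation orbit is open. So you have reconstructed the proof of the cited source rather than diverged from the paper. (Semisimplicity of $G$ is never used; the argument works for any compact $G$.)

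One step deserves tightening. From ``the local dimension of $\Hom(S,G)$ at $\phi$ is at most $d:=\dim Z^{1}=\dim(G\cdot\phi)$'' plus invariance of domain you cannot yet conclude that the orbit is open: the union of the two coordinate axes in $\bbR^{2}$ has local dimension $1$ at the origin and contains the $x$-axis as a $1$-dimensional submanifold, which is nevertheless not open in it. The correct deduction uses the Zariski tangent space bound directly rather than the local dimension it implies: since $T_{\phi}\Hom(S,G)\subset Z^{1}(S,\frg)$ has dimension $d$, one can choose finitely many analytic functions vanishing on $\Hom(S,G)$ whose differentials at $\phi$ cut out $Z^{1}$; by the implicit function theorem their common zero locus is a smooth $d$-dimensional manifold $W$ containing $\Hom(S,G)$ near $\phi$. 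The orbit $G\cdot\phi$ is then a $d$-dimensional submanifold of the $d$-manifold $W$, hence open in $W$ and a fortiori open in $\Hom(S,G)$. With this repair (which uses only data you already have in hand) the proof is complete: the compact space $\Hom(S,G)$ is a disjoint union of open orbits, so there are finitely many of them.
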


\section{Proof of the main theorem}

An element in \(\bbZ^{\widehat G}\) can be expressed as a sequence $\vec{n}=\{a_{\rho}|\ \rho\in
\widehat{G}\}$, or equivalently as a function $\vec{n}: \widehat{G}\longrightarrow \bbZ$,
\[\vec{n}(\rho)=a_{\rho}, \forall\rho\in\widehat{G}.\]

In the topological space $\bbZ^{\widehat G}$, a sequence $\{\vec{n}_{i}|\ i\geq 1\}$ converges to $\vec{n}$ if
and only if for any finite subset $\{\rho_{j}|\ 1\leq j\leq m\}$ of $\widehat{G}$, we have
\[\lim_{i\rightarrow\infty}\vec{n}_{i}(\rho_{j})=\vec{n}(\rho_{j})\] for each $j$, $1\leq j\leq m$.

Let $X_{G}=\{\vec{n}\in\bbZ^{\widehat{G}}|\ 0\leq\vec{n}(\rho)\leq\dim\rho,\forall\rho\in\widehat{G}\}$. Then
$\mathscr{D}_{H}\in X_{G}$ for each closed subgroup $H$ of $G$. Since $X_{G}$ is a closed compact
subset, we have the following proposition.

\begin{proposition}\label{P:compact}
Any sequence $\{H_{n}|\ n\geq 1\}$ of closed subgroups of $G$ contains a subsequence
$\{H_{n_{j}}|\ j\geq 1\}$ such that $\mathscr{D}_{H_{n_{j}}}$ converges to some element $n$ in $X_{G}$.
\end{proposition}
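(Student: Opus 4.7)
The plan is to observe that Proposition \ref{P:compact} is essentially a statement about the sequential compactness of a countable product of finite sets, which follows by a Cantor-style diagonal extraction. The key point is that for a compact Lie group $G$, the set $\widehat{G}$ is at most countable (by the Peter--Weyl theorem), and for each fixed $\rho\in\widehat{G}$ the value $\mathscr{D}_{H}(\rho)=\dim V_\rho^{H}$ lies in the finite set $\{0,1,\ldots,\dim\rho\}$ for every closed subgroup $H$. Hence each coordinate of $\mathscr{D}_{H_n}$ ranges over a finite set, and the whole object $X_G$ is a countable product of finite discrete spaces.

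Concretely, I would enumerate $\widehat{G}=\{\rho_1,\rho_2,\ldots\}$ and build subsequences inductively. Starting from $\{H_n\}$, use the pigeonhole principle on $\mathscr{D}_{H_n}(\rho_1)\in\{0,1,\ldots,\dim\rho_1\}$ to extract a subsequence $\{H_n^{(1)}\}$ along which the value at $\rho_1$ is constant. Repeat on $\rho_2$ inside $\{H_n^{(1)}\}$ to extract $\{H_n^{(2)}\}$, and so on. Then the diagonal subsequence $H_{n_j}:=H_j^{(j)}$ has the property that for every $k$, the value $\mathscr{D}_{H_{n_j}}(\rho_k)$ is eventually constant in $j$. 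Defining $\vec{n}(\rho_k)$ to be this eventual value gives an element $\vec{n}\in X_G$, and by the explicit description of convergence in the product topology recalled just before the proposition (pointwise convergence at every $\rho\in\widehat{G}$), we have $\mathscr{D}_{H_{n_j}}\to\vec{n}$.

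Alternatively, and perhaps more cleanly for the write-up, one can simply invoke Tychonoff's theorem: $X_G=\prod_{\rho\in\widehat{G}}\{0,\ldots,\dim\rho\}$ is a product of compact Hausdorff spaces, hence compact; since the index set $\widehat{G}$ is countable and each factor is metrizable, $X_G$ itself is metrizable, and compactness is equivalent to sequential compactness. Either formulation yields the conclusion immediately.

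There is no real obstacle here: the proposition is a soft topological statement that uses nothing about the group-theoretic content of dimension data beyond the uniform bound $0\le\mathscr{D}_H(\rho)\le\dim\rho$. The only mild point worth mentioning is the countability of $\widehat{G}$, which one should cite (Peter--Weyl) to justify either the diagonal argument or the metrizability of the product. The substantive content of Theorem \ref{T:rigidity2} - namely that the limit $\vec{n}$ is itself a dimension datum $\mathscr{D}_H$, and that the subgroups $H_{n_j}$ can be conjugated to sit inside $H$ - lies entirely beyond this proposition and will require the structural results of Section 2.
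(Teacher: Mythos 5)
Your proposal is correct and matches the paper's approach: the paper simply notes that $\mathscr{D}_{H}\in X_{G}$ and that $X_{G}$ is compact (as a product of finite discrete spaces), and deduces sequential compactness. You supply the details the paper leaves implicit---the diagonal extraction, or equivalently the countability of $\widehat{G}$ needed to pass from compactness to sequential compactness---which is a worthwhile point but not a different method.
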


\begin{proof}[Proof of Theorem \ref{T:rigidity}]

We prove it by induction on $\dim G$.

{\it Step 1. Reduction to the case that $[G_0,G_0]\subset H_{n}$ for any $n$.}

Let $\frg_0$ be the Lie algebra of $G$ and $\fru_0=[\frg_0,\frg_0]$ the derived subalgebra of
$\frg_0$. Then $\fru_0$ is a compact semisimple Lie algebra. There is an adjoint homomorphism
\[\pi: G\longrightarrow\Aut(\fru_0).\] Write $G'=\Aut(\fru_0)$. It is a compact semisimple Lie
group of adjoint type.

Assume first there are infinitely many $n$ such that $\dim\pi(H_{n})\neq \dim G'$, by replacing
$\{H_{n}|i\geq 1\}$ by a subsequence if necessary, we may assume that $\dim\pi(H_{n})<\dim G'$
for any $n\geq 1$. By Lemma \ref{L:primitive closure}, each $\pi(H_{n})$ is contained in a primitive
subgroup of $G'$ with a lower dimension. As primitive subgroups of $G'$ have only finitely many
conjugacy classes (cf. Proposition \ref{P:primitive finite}), we may assume that all $\pi(H_{n})$
are contained in a primitive subgroup $G_1$ of $G'$ with $\dim G_1<\dim G'$. Write
$G_2=\pi^{-1}(G_1)$. Since $\pi(H_{n})\subset G_1$, we have $H_{n}\subset G_2$ for any $n$. We
finish the proof in this case by induction.

Now assume there are only finitely many $n$ such that $\dim\pi(H_{n})\neq \dim G'$, by replacing
$\{H_{n}|\ n\geq 1\}$ by a subsequence if necessary, we may assume that $\dim\pi(H_{n})=\dim G'$ for
any $n\geq 1$. For any $n$, from $\dim\pi(H_{n})=\dim G'$ we get
\[\dim \Int(\fru_0)=\dim G'=\dim\pi(H_n)=\dim\pi(H_n\cap G_0).\] Thus
$\pi (H_n\cap G_0)=\Int(\fru_0)$ since $\pi(H_n\cap G_0)\subset\Int(\fru_0)$ and $\Int(\fru_0)$ is
connected.

We have $G_0=Z_{G_0}[G_0,G_0]$, $G_0/Z_{G_0}\cong\Int(\fru_0)$ and $\pi: G_0\longrightarrow
\Int(\fru_0)$ is just the projection map $G_0\longrightarrow G_0/Z_{G_0}$. For any $n$, since
$\pi (H_n\cap G_0)=\Int(\fru_0)$, we have $(H_{n}\cap G_0)Z_{G_0}=G_0$. Thus \[[G_0,G_0]=
[(H_{n}\cap Z_{G_0})Z_{G_0},H_{n}\cap Z_{G_0})Z_{G_0}]=[H_{n}\cap Z_{G_0},H_{n}\cap Z_{G_0}]
\subset H_{n}.\]

From now on, we assume that $[G_0,G_0]\subset H_{n}$ for any $n$.

\smallskip

{\it Step 2. Reduction to the case that $G_0$ is abelian.}

Given a $\rho\in\widehat{G}$, if $\rho|_{[G_0,G_0]}$ does not contain any non-zero trivial
subrepresentation, we have $$\mathscr{D}_{H_{n}}(\rho)=\dim V_{\rho}^{H_{n}}=0.$$ If $\rho|_{[G_0,G_0]}$
contains a non-zero trivial subrepresentation, $\rho$ is a subrepresentation of
\[\Ind_{[G_0,G_0]}^{G}(\bf{1}_{[G_0,G_0]}).\] Here $\bf{1}_{[G_0,G_0]}$ is the $1$-dimensional trivial
representation of $[G_0,G_0]$. The action of $[G_0,G_0]$ on $\rho$ is trivial because $[G_0,G_0]$ is a
normal subgroup of $G$. Hence $\rho$ factors through the homomorphism $G\longrightarrow G/[G_0,G_0]$.
We only need to consider dimension data of the subgroups $\{H_{n}/[G_0,G_0]|\ n\geq 1\}$ of
$G/[G_0,G_0]$. As $(G/[G_0,G_0])_0$ is abelian, we reduce it to the case that $G_0$ is abelian.

From now on, we assume that $G_0$ is abelian.

\smallskip

{\it Step 3. Reduction to the case that $\{H_{n}\cap G_0|\ n\geq 1\}$ are all equal to some
$Z'\subset G_0$ and $Z'$ is a normal subgroup of $G$.}

Set $N_{n}=H_{n}/(H_{n}\cap C_{G}(G_0))$. Then $N_{n}\subset G/C_{G}(G_0)$. Since $G/C_{G}(G_0)$ is
a finite group, there are only finitely many possibilities for $N_{n}$. We may assume that they are
all equal to some $N\subset G/C_{G}(G_0)$. The finite group $N$ acts on $G_0$ by conjugation.

Consider the sequence $\{H_{n}\cap G_0|\ n\geq 1\}$. The sequence of dimension data associated to it
contains a convergent subsequence by Proposition \ref{P:compact}. We may assume that the sequence of
dimension data itself converges. Denote by $L$ be the dual group of $G_0$, i.e. the set of isomorphism
classes of $1$-dimensional complex linear representations of $G_0$. Then $L$ is a lattice. Write
$L'\subset L$ for the subset of irreducible representations $\rho$ of $G_0$ such that
\[\lim_{n\rightarrow\infty}\dim \rho^{H_{n}\cap G_0}=1.\]

\begin{claim}\label{Claim}
 $L'$ is an $N$-stable sublattice.
\end{claim}

We defer the proof of Claim \ref{Claim} and finish the proof of Theorem \ref{T:rigidity} first. Let
$Z'\subset G_0$ be the intersection of the kernels of the linear characters in $L'$. By Claim \ref{Claim},
$L'$ is $N$-stable. Thus $Z'$ is also $N$-stable. Choose a basis $\{\rho_{j}| 1\leq j\leq m\}$ of $L'$.
For any $j$, there exists an $k_{j}$ such that $\dim\rho_{j}^{H_{n}\cap G_0}=1$ for any $n\geq k_{j}$.
Write \[k=\max\{k_{j}: 1\leq j\leq m\}.\] For any $n\geq k$ and any $j$ with $1\leq j\leq m$, we have
$\dim\rho_{j}^{H_{n}\cap G_0}=1$. That is to say, $H_{n}\cap G_0$ acts trivially on $\rho_{j}$. As
$\rho_{j}$ is a basis of $L'$, we have $H_{n}\cap G_0$ acts trivially on any $\rho\in L'$. This means
\[H_{n}\cap G_0\subset Z'\] for any $n\geq k$. Replacing $\{H_{n}\cap G_0\subset Z'|\ n\geq 1\}$
by a subsequence if necessary, we may assume that $H_{n}\cap G_0\subset Z'$ for any $n\geq 1$.
Since $Z'$ is $N$-stable and $N_{n}=N$, we have $H_{n}\subset N_{G}(Z')$. Replacing $G$ by
$N_{G}(Z')$, we may and do assume that $Z'$ is normal in $G$. Let $H'_{n}=H_{n}Z'$.

Given a $\sigma\in\widehat{G}$, if $\sigma|_{G_0}$ does not contain any simple irreducible
sub-representation isomorphic to an element of $L'$, then
\[\lim_{n\rightarrow\infty}\dim\rho^{H'_{n}}=\lim_{n\rightarrow\infty}\dim\rho^{H_{n}}=0.\]

If $\sigma|_{G_0}$ contains a simple irreducible subrepresentation isomorphic to an element
of $L'$, the action of $Z'$ on $\sigma$ must be trivial. Thus \[\lim_{n\rightarrow\infty}
\dim\rho^{H'_{n}}=\lim_{n\rightarrow\infty}\dim\rho^{H_{n}Z'}=\lim_{n\rightarrow\infty}
\dim\rho^{H_{n}}.\]


By the above, \[\lim_{n\rightarrow\infty}\mathscr{D}_{H'_{n}}=\lim_{n\rightarrow\infty}
\mathscr{D}_{H_{n}}.\] Replacing $\{H_{n}|\ n\geq 1\}$ by $\{H'_{n}|n\geq 1\}$, it reduces to
the case $\{H_{n}\cap G_0|\ n\geq 1\}$ are all equal to some $Z'\subset G_0$ and $Z'$ is a normal
subgroup of $G$.

From now on, we assume that $G_0$ is abelian, there is a subgroup $Z'$ of $G_0$ that is
a normal subgroup of $G$, and $H_{n}\cap G_0=Z'$ for any $n\geq 1$.

\smallskip

{\it Step 4. Conclusion.}

Arguing similarly as in Step 2, for any $\rho\in\widehat{G}$, if $\rho|_{Z'}$ does not contain any
non-zero trivial subrepresentation, then $\mathscr{D}_{H_{n}}(\rho)=\dim V_{\rho}^{H_{n}}=0$ for any
$n\geq 1$. If $\rho|_{Z'}$ contains a non-zero trivial subrepresentation, the action of $Z'$ on
$\rho$ is trivial. Hence $\rho$ factors through the homomorphism $G\longrightarrow G/Z'$. We only
need to consider the dimension data of the subgroups $\{H_{n}/Z'\}$ of $G/Z'$.

Now each $H_{n}/Z'$ is a finite group with order bounded by $|G/G_0|$. By Proposition
\ref{L:group finite}, there is a subsequence $\{H_{n_{j}}|\ j\geq 1\}$ of $\{H_{n}|\ n\geq 1\}$ such
that the subgroups in $\{H_{n_{j}}\}$ are conjugate to each other. Let $H$ be one of $\{H_{n_{j}}\}$
and $H'=G$. Then they satisfy the desired conclusion of the theorem. Going through the reduction steps,
we see that the subgroup $H'$ so constructed is a quasi-primitive subgroup of the original compact
Lie group $G$. \end{proof}

\begin{proof}[Proof of Claim \ref{Claim}]
Given a $\rho\in L'$, there exists a $k_{\rho}\geq 1$ such that $\dim V_{\rho}^{H_{n}\cap G_0}=1$
for any $n\geq k_{\rho}$. As $G_0$ is abelian, we have $\dim\rho=1$. The above equality means that
$H_{n}\cap G_0$ acts trivially on $\rho$ for any $n\geq k_{\rho}$. For another $\rho'\in L'$,
there also exists a $k_{\rho'}\geq 1$ such that $H_{n}\cap G_0$ acts trivially on $\rho'$
for any $n\geq k_{\rho'}$. Set $k=\max\{k_{\rho},k_{\rho'}\}$. Then $H_{n}\cap G_0$ acts trivially
on $\rho^{\pm{a}}\otimes\rho'^{\pm{1}}$ for any $n\geq k$. Here $\rho^{-1}$ is the contragredient
representation of $\rho$. Hence we have $\rho^{\pm{1}}\otimes\rho'^{\pm{1}}\in L'$. Therefore, $L'$ is
a sublattice of $L$.

For any $n\geq 1$, since $N_{n}=H/(H\cap C_{G_0})=N$, we get that $H_{n}$ is stable under the
conjugation action of $N$. Hence the set of those $\rho\in L$ such that
$\dim V_{\rho}^{H_{n}\cap G_0}=1$ is also stable under $N$. Therefore, $L'$ is stable under $N$.
\end{proof}

\begin{proof}[Theorem \ref{T:rigidity} implies Corollary \ref{C:DT-closed}]
By Theorem \ref{T:rigidity}, there exists a closed subgroup $H$ of $G$ and a subsequence
$\{H_{n_{j}}|\ j\geq 1\}$ such that \[\lim_{j\rightarrow\infty}\mathscr{D}_{H_{n_{j}}}=
\mathscr{D}_{H}.\] Hence \[\vec{n}=\lim_{n\rightarrow\infty}\mathscr{D}_{H_{n}}=
\lim_{j\rightarrow\infty}\mathscr{D}_{H_{n_{j}}}=\mathscr{D}_{H}.\]
\end{proof}

\section{Some consequences}\label{S:consequences}
Now we fix a compact Lie group $G$.

In \cite{Larsen}, given a finite-dimensional complex linear representation $V$ of $G$, Larsen studied
the moments \[\dim (V^{\otimes a}\otimes(V^{\ast})^{\otimes b})^{G}\] and got some rigidity properties
for them. Without loss of generality we may assume that $V$ is a unitary representation of $G$, it is
clear that the moments encode a part of the information of the dimension datum of the subgroup $G$ of
$\U(V)$. Theorem \ref{T3} is an analoguous statement of Theorem 4.2 in \cite{Larsen}. It is clear that
Theorem \ref{T3} implies Theorem 4.2 in \cite{Larsen}. We also use Theorem \ref{T:rigidity} to prove
some theorems in a joint paper \cite{An-Yu-Yu}. The proofs here are different and independent with
those in \cite{Larsen} and \cite{An-Yu-Yu}. Moreover, we give an application of Theorem
\ref{T:rigidity2} to isospectral geometry.

\begin{definition}\label{D:quasi-torus}
A compact Lie group $H$ is called a quasi-torus if $H_0$ is a torus.
\end{definition}

In the case that $\{H_{n}|\ n\geq 1\}$ in Theorem \ref{C:DT-closed} are all quasi-tori, we can say more about
the closed subgroup $H$. Theorem \ref{T3} below strengthens \cite{Larsen}, Theorem 4.2 by making the
convergence valid for dimension data rather than only for moments.

\begin{theorem}\label{T3}
If $\vec{n}\in\bbZ^{\widehat{G}}$ is the limit of a sequence of dimension data of quasi-tori of $G$, then
$\vec{n}=\mathscr{D}_{H}$ for some quasi-torus $H$ contained in $G$.
\end{theorem}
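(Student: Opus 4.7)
The plan is to apply Theorem~\ref{T:rigidity} to the given sequence $\{H_n\}$ of quasi-tori and show that the limiting subgroup it produces is itself a quasi-torus; the quasi-torus hypothesis on the $H_n$ never enters into Theorem~\ref{T:rigidity} itself, but it will survive the limit thanks to the sandwiching condition $[H'_0,H'_0]\subset g_jH_{n_j}g_j^{-1}$.

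First I would invoke Theorem~\ref{T:rigidity}, which supplies closed subgroups $H\subset H'$ of $G$ with $H'$ quasi-primitive and $[H'_0,H'_0]\subset H$, a subsequence $\{H_{n_j}\}$, and elements $g_j\in G$, such that
\[ [H'_0,H'_0]\subset g_jH_{n_j}g_j^{-1}\subset H \]
for every $j$ and $\mathscr{D}_{H_{n_j}}\to\mathscr{D}_H$. Combined with the hypothesis $\mathscr{D}_{H_n}\to\vec{n}$, this already gives $\vec{n}=\mathscr{D}_H$, so only the quasi-torus property of $H$ remains to be established.

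The main step is to show that $H'_0$ is a torus. Since $[H'_0,H'_0]$ is connected, the inclusion $[H'_0,H'_0]\subset g_jH_{n_j}g_j^{-1}$ factors through the identity component, giving
\[ [H'_0,H'_0]\subset (g_jH_{n_j}g_j^{-1})_0=g_j(H_{n_j})_0g_j^{-1}. \]
Because $H_{n_j}$ is a quasi-torus, $(H_{n_j})_0$ is a torus, so the conjugate on the right is abelian; hence $[H'_0,H'_0]$ is abelian. On the other hand, the commutator subgroup of a connected compact Lie group is closed, connected, and semisimple, and a semisimple group that is also abelian must be trivial. Therefore $[H'_0,H'_0]=1$, so $H'_0$ is a torus.

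Finally, $H\subset H'$ forces $H_0\subset H'_0$, and any closed connected subgroup of a torus is a torus, so $H_0$ is a torus and $H$ is the required quasi-torus. I do not expect a real obstacle here: all the heavy lifting has been absorbed by Theorem~\ref{T:rigidity}, and the only new content is the elementary observation that the sandwich $[H'_0,H'_0]\subset g_j(H_{n_j})_0g_j^{-1}$ traps the \emph{semisimple} part of $H'_0$ inside a torus, which collapses it to a point.
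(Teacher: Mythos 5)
Your proposal is correct and follows essentially the same route as the paper: apply Theorem~\ref{T:rigidity}, use the sandwich $[H'_0,H'_0]\subset g_jH_{n_j}g_j^{-1}$ together with the quasi-torus hypothesis to force $[H'_0,H'_0]$ to be trivial, and conclude that $H_0\subset H'_0$ is a torus. The only difference is that you spell out the intermediate step (a connected semisimple group trapped inside a torus is trivial) which the paper leaves implicit.
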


\begin{proof}
Denote by \[\vec{n}=\lim_{n\rightarrow\infty}\mathscr{D}_{H_{n}}.\]

By Theorem \ref{T:rigidity}, there exist two closed subgroups $H,H'\subset G$ with $[H'_0,H'_0]
\subset H\subset H'$, a subsequence $\{H_{n_{j}}|\ j\geq 1\}$ and a sequence
$\{g_{j}|\ j\geq 1, g_{j}\in G\}$ such that \[\forall j\geq 1, [H'_0,H'_0]\subset g_{j}H_{n_{j}}g_{j}^{-1}
\subset H\] and \[\lim_{j\rightarrow\infty}\mathscr{D}_{H_{n_{j}}}=\mathscr{D}_{H}.\]

As each $H_{n_{j}}$ is a quasi-torus and $[H'_0,H'_0]\subset g_{j}H_{n_{j}}g_{j}^{-1}$, we have
$[H'_0,H'_0]=0$. In other words, $H'_0$ is a torus. Therefore $H_0$ is a torus since $H\subset H'$. We also
have \[\vec{n}=\lim_{n\rightarrow\infty}\mathscr{D}_{H_{n}}=\lim_{j\rightarrow\infty}\mathscr{D}_{H_{n_{j}}}=
\mathscr{D}_{H}.\]
\end{proof}


\begin{lemma}\label{L:compare}
Given two closed subgroups $H$ and $H'$ of $G$, if $H\subset H'$ and $\mathscr{D}_{H}=\mathscr{D}_{H'}$,
then $H=H'$.
\end{lemma}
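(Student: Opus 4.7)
Proof plan.

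The idea is to show that $\mathscr{D}_H$ together with the inclusion $H\subset H'$ forces the Haar probability measures $\mu_H$ and $\mu_{H'}$, viewed as Borel probability measures on $G$, to coincide. Once this is established, comparing supports yields $H=H'$ at once. The role of the inclusion is crucial: it promotes an equality of dimensions into an equality of fixed subspaces, and hence an equality of projection operators, which is the actual information needed to recover the measures.

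First, I would recall that for each $\rho\in\widehat{G}$, the operator
\[
P_H^\rho \ :=\ \int_H \rho(h)\,d\mu_H(h)\ =\ \int_G \rho(g)\,d\mu_H(g)
\]
is the orthogonal projection of $V_\rho$ onto $V_\rho^{H}$ (a standard averaging argument using unitarity of $\rho$), and its trace equals $\mathscr{D}_H(\rho)$; the analogous statement holds for $H'$. Since $H\subset H'$ gives $V_\rho^{H'}\subset V_\rho^{H}$, the hypothesis $\dim V_\rho^{H}=\dim V_\rho^{H'}$ promotes this inclusion to an equality of subspaces. It follows that $P_H^\rho=P_{H'}^\rho$ as operators on $V_\rho$, for every $\rho\in\widehat{G}$.

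Next I would invoke Peter--Weyl: the matrix coefficients of the irreducible representations of $G$ span a uniformly dense subspace of $C(G)$, so any regular complex Borel measure on $G$ is determined by its pairings with matrix coefficients, equivalently by the family $\{\int_G\rho(g)\,d\mu(g)\}_{\rho\in\widehat{G}}$. Applied to $\mu_H$ and $\mu_{H'}$, the previous step yields $\mu_H=\mu_{H'}$ as Borel probability measures on $G$. Since the support of $\mu_H$ (resp.\ $\mu_{H'}$) is the closed subgroup $H$ (resp.\ $H'$), comparing supports gives $H=\supp\mu_H=\supp\mu_{H'}=H'$. I do not anticipate any genuine obstacle here: the projection-operator identity, the density of matrix coefficients in $C(G)$, and the support computation are all standard consequences of Peter--Weyl and the definition of Haar measure.
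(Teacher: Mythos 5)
Your argument is correct, and it takes a genuinely different route from the paper. The paper argues contrapositively and representation-theoretically: assuming $H\subsetneq H'$, it finds a non-trivial irreducible $\sigma$ of $H'$ inside $L^{2}(H'/H)=\Ind_{H}^{H'}\mathbf{1}_{H}$ (so $\sigma^{H}\neq 0$ but $\sigma^{H'}=0$), lifts it by Frobenius reciprocity to an irreducible $\rho$ of $G$ with $\sigma\subset\rho|_{H'}$, and concludes $\dim\rho^{H}>\dim\rho^{H'}$; this exhibits an explicit representation at which the two dimension data differ, using nothing beyond complete reducibility and Frobenius reciprocity. You instead upgrade the equality of dimensions, via the inclusion $V_{\rho}^{H'}\subset V_{\rho}^{H}$, to an equality of fixed subspaces and hence of the averaging projections $P^{\rho}_{H}=P^{\rho}_{H'}$ for every $\rho$, then invoke Peter--Weyl density of matrix coefficients in $C(G)$ to conclude $\mu_{H}=\mu_{H'}$ as measures on $G$, and finish by comparing supports. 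Each step of yours is sound: the projections are orthogonal with respect to a single $G$-invariant inner product (so equal images force equal operators), and the support of the Haar measure of a closed subgroup, pushed into $G$, is that subgroup. What your approach buys is a cleaner conceptual statement -- the dimension datum of a subgroup of $H'$ determines its Haar measure, hence the subgroup itself -- at the cost of a small amount of measure-theoretic machinery; what the paper's approach buys is an explicit witness $\rho$ (controllable in terms of $L^{2}(H'/H)$) where the two dimension data already disagree.
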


\begin{proof}
Suppose $H\subset H'$ is a proper inclusion. All $H'$-invariant functions in the space
$\Ind_{H}^{H'}\mathbf{1}_{H}=L^{2}(H'/H)$ are constant functions, so there exists a non-trivial
irreducible representation $\sigma$ of $H'$ appearing in $\Ind_{H}^{H'}\mathbf{1}_{H}$. Choosing
any irreducible representation $\rho$ of $G$ appearing in $\Ind_{H'}^{G}\sigma$, then
$\sigma\subset\rho|_{H'}$. Thus \[\dim\rho^{H}-\dim\rho^{H'}\geq\dim\sigma^{H}-
\dim\sigma^{H'}\geq 1-0>0.\] Hence $\mathscr{D}_{H}\neq\mathscr{D}_{H'}$.
\end{proof}


\begin{theorem}\label{T:finite}
Given an element $\vec{n}\in\bbZ^{\widehat{G}}$, there are only finitely many conjugacy classes of closed
subgroups \(H\) of $G$ satisfying $\mathscr{D}_{H}=\vec{n}$.
\end{theorem}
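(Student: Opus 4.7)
\textbf{Proof plan for Theorem \ref{T:finite}.} The plan is to argue by contradiction, converting an infinite family of non-conjugate closed subgroups with dimension datum $\vec n$ into a contradiction via Theorem \ref{T:rigidity} and the rigidity property in Lemma \ref{L:compare}.

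Suppose for contradiction that there are infinitely many $G$-conjugacy classes of closed subgroups with dimension datum $\vec n$. Choose a sequence $\{H_n\mid n\geq 1\}$ of closed subgroups with $\mathscr{D}_{H_n}=\vec n$ for every $n$ and with the $G$-conjugacy classes of the $H_n$ pairwise distinct. Apply Theorem \ref{T:rigidity} to this sequence: we obtain closed subgroups $H,H'$ of $G$ with $[H'_0,H'_0]\subset H\subset H'$, a subsequence $\{H_{n_j}\mid j\geq 1\}$, and elements $g_j\in G$ such that $g_jH_{n_j}g_j^{-1}\subset H$ for every $j$ and
\[\lim_{j\to\infty}\mathscr{D}_{H_{n_j}}=\mathscr{D}_H.\]
Because the sequence $\{\mathscr{D}_{H_{n_j}}\}$ is constantly equal to $\vec n$, its limit is $\vec n$, so $\mathscr{D}_H=\vec n$. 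In particular $\mathscr{D}_{g_jH_{n_j}g_j^{-1}}=\mathscr{D}_{H_{n_j}}=\vec n=\mathscr{D}_H$ for every $j$.

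Now the key step: from $g_jH_{n_j}g_j^{-1}\subset H$ together with $\mathscr{D}_{g_jH_{n_j}g_j^{-1}}=\mathscr{D}_H$, Lemma \ref{L:compare} forces $g_jH_{n_j}g_j^{-1}=H$ for every $j$. Hence every $H_{n_j}$ lies in a single $G$-conjugacy class, namely that of $H$. This contradicts the choice of $\{H_n\}$ as representatives of pairwise distinct $G$-conjugacy classes, completing the proof.

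The argument is essentially a direct combination of the two main inputs, so there is no serious obstacle once Theorem \ref{T:rigidity} and Lemma \ref{L:compare} are in hand; the only subtle point is the observation that the conjugating elements $g_j$ place all terms of the subsequence into the \emph{single} conjugacy class of $H$, which is exactly what is needed to contradict the assumption of infinitely many distinct classes.
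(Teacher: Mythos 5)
Your proposal is correct and follows essentially the same route as the paper: contradiction, Theorem \ref{T:rigidity} applied to a sequence of pairwise non-conjugate subgroups with constant dimension datum, and Lemma \ref{L:compare} to rule out the resulting inclusions. The only cosmetic difference is that you invoke Lemma \ref{L:compare} to force $g_jH_{n_j}g_j^{-1}=H$ for all $j$ and then contradict non-conjugacy, whereas the paper picks a single index where the inclusion is proper and contradicts Lemma \ref{L:compare} directly; these are logically equivalent.
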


\begin{proof}
We prove it by contradiction. Suppose there are infinitely many conjugacy classes of closed subgroups
\(H\) such that $\mathscr{D}_{H}=\vec{n}$. Then there exists a sequence of pairwise non-conjugate closed
subgroups $\{H_{n}|\ n\geq 1\}$ of $G$ such that $\mathscr{D}_{H_{n}}=\vec{n}$ for each $n\geq 1$.

By Theorem \ref{T:rigidity}, there exists a closed subgroup $H$ of $G$, a subsequence
$\{H_{n_{j}}|\ j\geq 1\}$ and a sequence $\{g_{j}|\ j\geq 1, g_{j}\in G\}$ such that
\[\forall j\geq 1, g_{j}H_{n_{j}}g_{j}^{-1}\subset H\] and
\[\lim_{j\rightarrow\infty}\mathscr{D}_{H_{n_{j}}}=\mathscr{D}_{H}.\]
As subgroups in the sequence $\{H_{n_{j}}|\ j\geq 1\}$ are non-conjugate to each other, we have
a proper inclusion $g_{j_0}H_{n_{j_0}}g_{j_0}^{-1}\subset H$  for $j_0=1$ or $2$. Moreover, we have
\[\mathscr{D}_{H}=\lim_{j\rightarrow\infty}\mathscr{D}_{H_{n_{j}}}=\vec{n}=
\mathscr{D}_{g_{j_0}H_{n_{j_0}}g_{j_0}^{-1}}.\] Write $H''=g_{j_0}H_{n_{j_0}}g_{j_0}^{-1}$. Then
$H''\subset H$ is a proper inclusion and $\mathscr{D}_{H''}=\mathscr{D}_{H}$. This is in contradiction
with Lemma \ref{L:compare}.
\end{proof}


\begin{theorem}\label{T:Larsen}
Given a closed subgroup $K$ of \(G\), put \(\vec{n}=\mathscr{D}_{K}\) and \(\{K_1,\ldots,K_r\}=
\mathscr{D}^{-1}(\vec{n})\). Then \(\vec{n}\) is an isolated point in \({\rm Im}(\mathscr{D})\) if
and only if \(K_1,\ldots,K_r\) are all semisimple groups.
\end{theorem}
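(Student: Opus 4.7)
The plan is to prove the equivalence in both directions, using Lemma \ref{L:compare} as a rigidity principle (a strict inclusion of subgroups forces a strict inequality of dimension data) and Theorem \ref{T:rigidity} as a compactness principle (every limit of dimension data is realised by a closed subgroup into which conjugates of a tail of the approximating sequence sit). The fibre $\mathscr{D}^{-1}(\vec{n})$ is finite by Theorem \ref{T:finite}, so only finitely many conjugacy classes $K_1,\ldots,K_r$ must be analysed.

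For the direction ``all $K_i$ semisimple implies $\vec{n}$ isolated'', I argue by contradiction. Suppose $\{H_m\}$ satisfies $\mathscr{D}_{H_m}\to\vec{n}$ and $\mathscr{D}_{H_m}\neq\vec{n}$ for each $m$. Theorem \ref{T:rigidity} yields, after passing to a subsequence and conjugating, elements $L_j=g_jH_{m_j}g_j^{-1}\subset H$ with $\mathscr{D}_{L_j}\to\mathscr{D}_H=\vec{n}$; hence $H$ is conjugate to some $K_{i_0}$, which I take as the starting container $M:=K_{i_0}$, a semisimple subgroup satisfying $\mathscr{D}_M=\vec{n}$ and $L_j\subset M$ for all $j$. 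I then iterate the following step, preserving the invariants that $M$ is semisimple with $\mathscr{D}_M=\vec{n}$ and contains a tail of the $L_j$. If a subsequence of the $L_j$'s consists of primitive subgroups of $M$ or of subgroups of $M$ with $\dim L_j=\dim M$, then in either case the $L_j$ take only finitely many values up to $M$-conjugation (by Proposition \ref{P:primitive finite} in the primitive case, and by the finiteness of the subgroup lattice of $M/M_0$ in the full-dimensional case); pigeonholing and conjugating in $M$, I obtain a constant $L_j=L$, and the limit gives $\mathscr{D}_L=\vec{n}=\mathscr{D}_M$. Lemma \ref{L:compare} applied to $L\subset M$ then forces $L=M$, so $\mathscr{D}_{H_{m_j}}=\vec{n}$ for large $j$, contradicting the hypothesis. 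Otherwise each $L_j$ is a non-primitive subgroup of $M$ of dimension strictly less than $\dim M$ for all large $j$, and Lemma \ref{L:primitive closure} combined with Proposition \ref{P:primitive finite} produces, after a further subsequence and $M$-conjugation, a fixed primitive subgroup $P\subset M$ with $\dim P<\dim M$ containing every $L_j$. Squeezing $\mathscr{D}_{L_j}\geq\mathscr{D}_P\geq\mathscr{D}_M$ against $\mathscr{D}_{L_j}\to\mathscr{D}_M=\vec{n}$ forces $\mathscr{D}_P=\vec{n}$, so $P$ is conjugate to some $K_i$ and hence semisimple; I replace $M$ by $P$ and iterate. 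Since $\dim M$ strictly decreases at each iterated step, the process terminates in the contradictory case after finitely many iterations.

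For the converse, suppose some $K_i$ is not semisimple, so the connected centre $T:=Z((K_i)_0)_0$ is a torus of positive dimension. I construct a sequence $H_n\subsetneq K_i$ with $\mathscr{D}_{H_n}\to\mathscr{D}_{K_i}=\vec{n}$; Lemma \ref{L:compare} then ensures $\mathscr{D}_{H_n}\neq\vec{n}$ for every $n$, exhibiting $\vec{n}$ as an accumulation point of $\Im(\mathscr{D})$. Take $F_n=\{t\in T:t^{n!}=1\}$, a finite characteristic subgroup of $T$ (hence normal in $K_i$) whose union over $n$ is dense in $T$, and fix a finite subgroup $S\subset K_i$ meeting every connected component of $K_i$. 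Define
\[H_n:=F_n\cdot[(K_i)_0,(K_i)_0]\cdot S.\]
Since $F_n\cdot[(K_i)_0,(K_i)_0]$ is normal in $K_i$, the product $H_n$ is a closed subgroup, it satisfies $H_n\cdot(K_i)_0=K_i$, and $\dim(H_n\cap(K_i)_0)=\dim[(K_i)_0,(K_i)_0]<\dim(K_i)_0$, so $H_n\subsetneq K_i$. A standard coset decomposition of $\mu_{H_n}$ combined with the equidistribution of $F_n$ in $T$ yields $\mu_{H_n}\to\mu_{K_i}$ as measures on $G$, whence $\mathscr{D}_{H_n}(\rho)=\int\chi_\rho\,d\mu_{H_n}\to\int\chi_\rho\,d\mu_{K_i}=\mathscr{D}_{K_i}(\rho)$ for every $\rho\in\widehat G$.

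The main obstacle lies in the converse direction, specifically in producing the finite subgroup $S\subset K_i$ surjecting onto the component group $K_i/(K_i)_0$: this requires a structural argument handling possibly non-split extensions $1\to(K_i)_0\to K_i\to K_i/(K_i)_0\to 1$, which one carries out by noting that every coset of $(K_i)_0$ contains torsion elements (found inside the closure of the cyclic subgroup generated by any representative, which is a compact abelian Lie group with dense torsion) and assembling a finite group from such torsion elements, enlarging $F_n$ if necessary to absorb any torsion of $S$ lying in $(K_i)_0$. Once $S$ is in hand, verifying that $H_n$ is a proper subgroup and that the Haar-measure convergence $\mu_{H_n}\to\mu_{K_i}$ holds reduces to direct coset-decomposition calculations, and the iterative descent in the other direction is mechanical given the tools already established in the paper.
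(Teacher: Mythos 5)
Your ``if'' direction is correct but takes a genuinely different route from the paper. The paper exploits the \emph{lower} bound in Theorem \ref{T:rigidity}: from $[H'_0,H'_0]\subset g_jH_{n_j}g_j^{-1}\subset H$ and the semisimplicity of $H\in\mathscr{D}^{-1}(\vec{n})$ it deduces $[H'_0,H'_0]=H_0$, so the conjugated subgroups are sandwiched between $H_0$ and $H$; since $H/H_0$ is finite, a pigeonhole plus Lemma \ref{L:compare} finishes immediately. You instead discard the lower bound and rerun a descent through primitive closures (Lemma \ref{L:primitive closure}, Proposition \ref{P:primitive finite}) inside the semisimple container $M$. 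This works, and the monotonicity squeeze $\mathscr{D}_{L_j}\geq\mathscr{D}_P\geq\mathscr{D}_M$ is sound, but note that in your ``otherwise'' case you have already produced a proper inclusion $P\subsetneq M$ (as $\dim P<\dim M$) with $\mathscr{D}_P=\mathscr{D}_M$, which contradicts Lemma \ref{L:compare} on the spot --- the iteration is unnecessary. The trade-off: the paper's argument is shorter but leans on the full strength of Theorem \ref{T:rigidity}; yours is more self-contained relative to the statement of convergence alone.

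For the ``only if'' direction the paper gives no proof (it defers to \cite{An-Yu-Yu}), so you are attempting more than the paper does, and your overall construction $H_n=F_n\cdot[(K_i)_0,(K_i)_0]\cdot S$ with $\mathscr{D}_{H_n}\to\mathscr{D}_{K_i}$ and $H_n\subsetneq K_i$ is the right one; the stabilization $V_\rho^{H_n}\downarrow V_\rho^{K_i}$ for an increasing chain with dense union is fine. The genuine gap is exactly where you flag it: the existence of a \emph{finite} subgroup $S\subset K_i$ with $S\cdot (K_i)_0=K_i$. Your sketch --- pick a torsion element in each component and ``assemble a finite group'' --- does not work: a product of two chosen representatives differs from the representative of the product-coset by an element of $(K_i)_0$ that need not be torsion, so the generated group can be infinite (already two involutions in $\O(2)$ generate an infinite group). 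The statement is a true classical fact, but it needs a real argument: e.g.\ reduce to $N_{K_i}(T)$ for $T$ a maximal torus of $(K_i)_0$, where the identity component is $T$ itself, and then lift the finite quotient $N_{K_i}(T)/T$ to a finite subgroup using divisibility of $T$ (the extension class in $H^2$ of a finite group with torus coefficients is torsion and can be killed by passing to a finite-torsion pushout), or simply cite the result. Also beware that ``enlarging $F_n$ to absorb the torsion of $S$ in $(K_i)_0$'' must be done so that $S\cap (K_i)_0$ lands inside $F_n\cdot[(K_i)_0,(K_i)_0]$, otherwise your dimension count for $H_n\cap (K_i)_0$ and hence the properness of $H_n$ is not justified.
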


\begin{proof}
We omit the proof for the ``only if'' part of the statement. There is a nice and short proof of this in
\cite{An-Yu-Yu}.

For the ``if'' part, suppose $n=\mathscr{D}_{K}$ is not an isolated point in $\Im(\mathscr{D})$.
Then there exists a sequence $\{H_{n}|\ n\geq 1\}$ of closed subgroups of $G$ such that
$\mathscr{D}_{H_{n}}\neq \vec{n}$ for any $n\geq 1$ and \[\lim_{n\rightarrow\infty}\mathscr{D}_{H_{n}}=\vec{n}.\]
By Theorem \ref{T:rigidity}, there exist two closed subgroups $H,H'\subset G$ with $[H'_0,H'_0]
\subset H\subset H'$, a subsequence $\{H_{n_{j}}|\ j\geq 1\}$ and a sequence
$\{g_{j}|\ j\geq 1, g_{j}\in G\}$ such that \[\forall j\geq 1, [H'_0,H'_0]
\subset g_{j}H_{n_{j}}g_{j}^{-1}\subset H\] and \[\lim_{j\rightarrow\infty}\mathscr{D}_{H_{n_{j}}}=
\mathscr{D}_{H}.\] From these we get $[H'_0,H'_0]\subset H_0\subset H'_0$ and $\vec{n}=\mathscr{D}_{H}$.
Thus $H\in\mathscr{D}^{-1}(\vec{n})$. As $\mathscr{D}^{-1}(\vec{n})$ consist of semisimple subgroups, we get
that $H$ is semisimple. Together with $[H'_0,H'_0]\subset H_0\subset H'_0$, we get $[H'_0,H'_0]=H_0$.
Hence $H_0\subset g_{j}H_{n_{j}}g_{j}^{-1}\subset H$ for any $j\geq 1$. Therefore there exists an
infinite set $n_{j_{\nu}}$ such that $g_{j_{\nu}}H_{n_{j_{\nu}}}g_{j_{\nu}}^{-1}$ are equal to each other.
We may and do assume that $\{H_{n_{j}}|\ j\geq 1\}$ are equal to each other. Write $H''$ for one of
them. Then $\mathscr{D}_{H''}=\mathscr{D}_{H}$. By Lemma \ref{L:compare} we get $H''=H$. This
is in contradiction with the fact that $\mathscr{D}_{H_{n}}\neq \vec{n}$ for any $n\geq 1$.
\end{proof}


Given a closed Riemannian manifold $(X,m)$, one can define the Laplacian operator on the linear space of
$L^2$-functions on $X$. It is a self-adjoint positive definite linear operator, so its spectrum consists of
discrete real numbers, which is called the Laplacian Spectrum of $(X,m)$. Given a compact connected Lie group
$G$ with a biinvariant Riemannian metric $m$, for each closed subgroup $H$ of $G$ the homegeneous space
$G/H$ inherits a Riemannian metric, still denoted $m$. We are interested on the Laplacian spectra of these
homogeneous spaces here.

\begin{proposition}\label{P:Isospectral}
Given a a compact connected Lie group $G$ with a biinvariant Riemannian metric $m$, there exist finitely
many conjugacy classes of closed subgroups $H$ of $G$ with the Laplacian spectra of $(G/H,m)$ equal to a
given one.
\end{proposition}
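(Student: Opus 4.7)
The plan is to express the Laplacian spectrum of $(G/H,m)$ in terms of $\mathscr{D}_H$ and then play this description against Theorem~\ref{T:rigidity}. Because $m$ is biinvariant, the Peter--Weyl decomposition
\[L^{2}(G/H)\;\cong\;\bigoplus_{\rho\in\widehat{G}}V_{\rho}\otimes(V_{\rho}^{\ast})^{H}\]
diagonalises the Laplacian: on the $\rho$-isotypic summand it acts by the Casimir eigenvalue $c_{\rho}$, a non-negative real depending only on $m$ and $\rho$. Writing $R_{\lambda}:=\{\rho\in\widehat{G}:c_{\rho}=\lambda\}$, the multiplicity of a Laplacian eigenvalue $\lambda$ in the spectrum of $(G/H,m)$ is
\[m_{\lambda}(H)\;=\;\sum_{\rho\in R_{\lambda}}(\dim\rho)\,\mathscr{D}_{H}(\rho).\]
As $c_{\rho}$ is a positive-definite quadratic polynomial in the highest weight of $\rho$, each $R_{\lambda}$ is finite.

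Suppose, for contradiction, that there exist infinitely many pairwise non-conjugate closed subgroups $\{H_{n}\}_{n\geq 1}$ of $G$ all realising the prescribed spectrum $\{m_{\lambda}\}$. Applying Theorem~\ref{T:rigidity} extracts a subsequence $\{H_{n_{j}}\}$, a closed subgroup $H\subset G$ and elements $g_{j}\in G$ with
\[g_{j}H_{n_{j}}g_{j}^{-1}\subset H\qquad\text{and}\qquad\mathscr{D}_{H_{n_{j}}}\to\mathscr{D}_{H}.\]
Conjugation invariance of dimension data combined with the inclusion yields the monotonicity $\mathscr{D}_{H_{n_{j}}}(\rho)\geq\mathscr{D}_{H}(\rho)$ for every $\rho\in\widehat{G}$ and every $j$.

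Next I would verify that $H$ itself has the prescribed spectrum. Fix $\lambda$; because the convergence $\mathscr{D}_{H_{n_{j}}}\to\mathscr{D}_{H}$ is pointwise in the discrete topology on each factor and $R_{\lambda}$ is finite, there exists $J_{\lambda}$ such that $\mathscr{D}_{H_{n_{j}}}(\rho)=\mathscr{D}_{H}(\rho)$ for all $\rho\in R_{\lambda}$ and $j\geq J_{\lambda}$; summing against $\dim\rho$ gives $m_{\lambda}(H)=m_{\lambda}(H_{n_{j}})=m_{\lambda}$. Hence for every $j\geq 1$ and every $\lambda$,
\[0\;=\;m_{\lambda}(H_{n_{j}})-m_{\lambda}(H)\;=\;\sum_{\rho\in R_{\lambda}}(\dim\rho)\bigl(\mathscr{D}_{H_{n_{j}}}(\rho)-\mathscr{D}_{H}(\rho)\bigr),\]
whose summands are non-negative and must therefore all vanish. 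Varying $\lambda$ yields $\mathscr{D}_{H_{n_{j}}}=\mathscr{D}_{H}$ for every $j$, and Lemma~\ref{L:compare} applied to the inclusion $g_{j}H_{n_{j}}g_{j}^{-1}\subset H$ then forces $g_{j}H_{n_{j}}g_{j}^{-1}=H$ for every $j$, contradicting pairwise non-conjugacy of the $H_{n_{j}}$.

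The main obstacle is the spectral-to-dimension-datum bridge: one must justify the Peter--Weyl diagonalisation, record the non-trivial fact that each Casimir level $R_{\lambda}$ is finite, and organise the multiplicity identity so that the containment $g_{j}H_{n_{j}}g_{j}^{-1}\subset H$ (together with spectral equality) upgrades pointwise convergence of dimension data to termwise equality. Once that bookkeeping is secured, Theorem~\ref{T:rigidity} and Lemma~\ref{L:compare} run the contradiction mechanically. A shorter but less self-contained alternative would be to first show that only finitely many dimension data are compatible with $\{m_{\lambda}\}$ and then quote Theorem~\ref{T:finite}, but the argument above avoids that detour.
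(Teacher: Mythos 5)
Your proposal is correct and follows essentially the same route as the paper: apply Theorem \ref{T:rigidity} to a sequence of pairwise non-conjugate isospectral subgroups, use the fact that the Laplacian spectrum of $(G/H,m)$ is computed from $\mathscr{D}_{H}$, and derive a contradiction with Lemma \ref{L:compare}. In fact you make explicit several points the paper's proof leaves implicit --- the Peter--Weyl multiplicity formula, the finiteness of each Casimir level $R_{\lambda}$, and the monotonicity argument that upgrades spectral equality to the termwise identity $\mathscr{D}_{H_{n_{j}}}=\mathscr{D}_{H}$ --- so your write-up is, if anything, more complete than the original.
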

\begin{proof}
We prove it by contradiction. Given a Laplacian spectrum, suppose there are infinitely many conjugacy classes
of closed subgroups \(H\) such that the spectra of $(G/H,m)$ equal to a given one. Then there exists a sequence
of pairwise non-conjugate closed subgroups $\{H_{n}|\ n\geq 1\}$ of $G$ such that the Laplacian spectra of
$(G/H,m)$ being equal to each other. By Theorem \ref{T:rigidity}, there exists a closed subgroup $H$ of $G$, a
subsequence $\{H_{n_{j}}|\ j\geq 1\}$ and a sequence $\{g_{j}|\ j\geq 1, g_{j}\in G\}$ such that
\[\forall j\geq 1, g_{j}H_{n_{j}}g_{j}^{-1}\subset H\] and
\[\lim_{j\rightarrow\infty}\mathscr{D}_{H_{n_{j}}}=\mathscr{D}_{H}.\] As subgroups in the sequence
$\{H_{n_{j}}|\ j\geq 1\}$ are non-conjugate to each other, we have a proper inclusion
$g_{j_0}H_{n_{j_0}}g_{j_0}^{-1}\subset H$  for $j_0=1$ or $2$. Write $H''=g_{j_0}H_{n_{j_0}}g_{j_0}^{-1}$.
Since the Laplacian spectrum of $(G/H,m)$ can be caculated from the dimension datum of $H$, the Laplacian
spectrum of $(G/H,m)$ is equal to  that of $H''$. Hence, $H''\subset H$ is a proper inclusion and
$(G/H,m)$ and $(G/H'',m)$ have the same Laplacian spectrum. This is in contradiction with Lemma \ref{L:compare}.
\end{proof}

\end{document}